\numberwithin{equation}{section}
\subjclass[2020]{11Y05, 11Y16, 68W10}
\pgfplotsset{compat=1.18}
\title{Choosing iteration maps for the parallel Pollard rho method}
\date{\today}
\author{Finn Rudolph}
\newtheorem{lemma}{Lemma}
\newtheorem{theorem}{Theorem}
\newtheorem{heuristic}{Heuristic}
\newcommand{\N}{\mathbb N}
\newcommand{\Z}{\mathbb Z}
\newcommand{\R}{\mathbb R}
\renewcommand{\phi}{\varphi}
\DeclareMathOperator{\lcm}{lcm}
\DeclarePairedDelimiter{\paren}{\lparen}{\rparen}
\DeclarePairedDelimiter{\curly}{\lbrace}{\rbrace}
\DeclarePairedDelimiter{\vertbar}{\lvert}{\rvert}
\newcommand{\E}{\mathbb E}
\newcommand{\F}{\mathcal F}
\newcommand{\td}{\widetilde}
\newcommand{\tfc}{N}
\newcommand{\nd}{n}
\newcommand{\eff}{q}
\newcommand{\idg}{d}
\newcommand{\tf}{\lambda}
\newcommand{\st}{s}
\newcommand{\stv}{S}
\newcommand{\ryl}{Q}
\newcommand{\tv}{X}
\newcommand{\grt}{G}
\newcommand{\rya}{T}
\newcommand{\lcmki}{\ell}
\newcommand{\fns}{\mathcal F}
\newcommand{\itfn}{f}
\begin{document}

\begin{abstract}
  Pollard's rho method finds a prime factor $p$ of an integer $\tfc$ by
  searching for a collision in a map of the form $x \mapsto x^{2k} + c$
  modulo $\tfc$. This search can be parallelized to multiple
  machines, which may use distinct parameters $k$ and $c$. In this
  paper, we give an asymptotic estimate for the expected running time
  of the parallel rho method depending on the choice of $k$ for each
  machine. We also prove
  that $k = 1$ is the best choice for one machine, if nothing about
  $p$ is known in advance.
\end{abstract}

\maketitle

\section{Introduction}

In \cite[p.~255,5.24]{cp05}, Crandall and Pomerance pose the following question:
Assume $M$ machines run the rho method independently on an integer
$\tfc$ and the $i$-th machine uses $x \mapsto x^{2k_i} + c_i$ as its
iteration map. How to assign parameters $k_i$ to them in an optimal
way? This problem is the subject of this paper. Let
$\tfc \in \N_{\ge 2}$ be the number to be factored and $p$ be the
smallest prime divisor of $\tfc$ not equal to 2. Statements about the
expected running time of the rho method to factor $\tfc$ are always
made in terms of $p$.

The choice of the $c_i$ is not discussed here, since they do not seem
to have much influence on the
running time and are best chosen independently at random. (Except
  that $0$ and $-2$ should be avoided, see \cite[p.~333]{pol75} and
\cite[p.~6]{cr99}.)
In contrast, the $k_i$ can affect
the running time significantly depending on their common factors with
$p - 1$. Let $d_i \colonequals \gcd(p - 1, 2k_i)$. The expected number
of steps the $i$-th machine requires is reduced
by a factor of $\sqrt{d_i - 1}$ (this will become clear from Theorem
\ref{th:main}, or see \cite{bp81}). However, the
running time per iteration of the $i$-th machine is increased by a
factor of $\log_2 2k_i$ in comparison to $k_i = 1$. Thus, it is not
immediately clear which choice of $k_i$ yields the best expected
running time.

\subsection{The main result}
\label{sec:main-result}

As usual for the analysis of the rho method, we assume that
the rho iteration map behaves like a random map (see Heuristic
\ref{heu:rma}). Under this assumption, the expected time until some
machine finds $p$ as a factor of $\tfc$ is asymptotic to
\begin{align}
  \sqrt {\pi p / 2} \, \paren*{ \sum_{i = 1}^M
    \frac {\gcd(p - 1, 2k_i) - 1} {\log_2^2 2k_i}
  }^{\!\! -1/2}
  \label{eq:main}
\end{align}
as $p \to \infty$. We say two functions $f, g : (0, \infty) \to (0,
\infty)$ are \emph{asymptotic} and write $f \sim g$, if $\lim_{x \to
\infty} f(x)/g(x) = 1$.

For $M = k_1 = 1$, (\ref{eq:main}) evaluates to
$\sqrt{\pi p / 2}$, which is the
classical estimate for rho method given
by \cite{pol75}. For $M = 1$ and arbitrary $k_1$, (\ref{eq:main}) reduces to
\begin{align*}
  \sqrt{\pi p / 2} \; \frac {\log_2 2k_1} {\sqrt{\gcd(p - 1, 2k_1) - 1}} \,.
\end{align*}
This also matches previous results, see e.g. \cite{bp81} or
\cite[p.~255,5.24]{cp05}.

\subsection{Formulation for random maps}
\label{sec:general-formulation}

We briefly recall aspects of the rho method relevant to our
analysis. For a complete description of the algorithm, see
\cite{pol75} or \cite[p.~229]{cp05}. As mentioned in
§\ref{sec:main-result}, there is a commonly used heuristic for
analyzing the rho method. To state it, a few definitions are required. Let
\begin{align*}
  \itfn_i: \Z/p\Z \to \Z/p\Z, \quad \itfn_i : x \mapsto x^{2k_i} + c_i
\end{align*}
be the iteration map of the $i$-th machine, viewed modulo $p$, and
$x_{i,0} \in \Z/p\Z$ its initial value, also modulo $p$. The initial
value is chosen uniformly at random from $\Z/\tfc\Z$.

Let $f : X \to X$ be a function on a finite set $X$ and denote by $f^j$
the $j$-fold composition of $f$ with
itself. For an element $x \in X$, we define the \emph{rho
length} of $f$ at $x$ by
\begin{align*}
  \rho(f, x) \colonequals \vertbar*{ \curly*{ f^j(x) : j \in \N} }.
\end{align*}
We include 0 in the natural numbers, so $x$ is also in the above set.

Let $n, d \in \N_{\ge 1}$ and assume that $d$ divides $n$. Define
$\F_\nd^\idg$ as the set of all functions $f : \Z/\nd\Z \to \Z/\nd\Z$
such that $|f^{-1}\{x\}| \in \{0, \idg\}$ for every $x \in \Z/\nd\Z$.
When picturing an element of $\F_{\nd}^{\idg}$ as a directed graph,
exactly $\nd/\idg$ nodes have indegree $\idg$, and all other nodes have
indegree 0.

The main issue in the analysis of the rho method is to estimate the
number of iterations a machine requires to discover $p$. This number
is $\rho(\itfn_i, x_{i,0})$ for the the $i$-th
machine, since it iterates $\itfn_i$ starting from $x_{i, 0}$ until a
value is encountered twice. However, there are currently no estimates
for the distribution or mean of $\rho(\itfn_i, \, \cdot \,)$ known,
so we rely on the following heuristic.

\begin{heuristic}
  \label{heu:rma}
  The distribution of $\rho(\itfn_i, \, \cdot \,)$ is close to the
  distribution of the map $\F_{p - 1}^{\idg_i} \times \Z/(p - 1)\Z
  \to \N$ sending $(f, x)$ to $\rho(f, x)$.
\end{heuristic}

\emph{Close} is to be understood as \emph{close enough to be
interchangeable for the analysis of the rho method.} An intuitive
justification for Heuristic \ref{heu:rma} stems from the distribution
of indegrees of $\itfn_i$. Let $(\Z/p\Z)^\times$ be the group of units of
$\Z/p\Z$. Since $(\Z/p\Z)^\times$ is cyclic, exactly $(p -
1)/\idg_i$ elements have $\idg_i$ preimages under the map $x \mapsto
x^{2k_i}$ on $(\Z/p\Z)^\times$. So the distribution of indegrees of
$\itfn_i$ equals the distribution of indegrees of elements in $\F_{p
- 1}^{\idg_i}$, up to a shift by one due to the inclusion of zero.
Thus, heuristically assuming that $\itfn_i$ exhibits no further patterns, we
approximate the distribution of $\rho(\itfn_i, \,\cdot\,)$ by the
rho length distribution of a random element of $\F_{p - 1}^{\idg_i}$.
Crude as this heuristic may seem, it works remarkably well in
practice. Theoretical and numerical results regarding Heuristic
\ref{heu:rma} are summarized in §\ref{sec:rma}.

We now formulate a rigorous result that can be used to derive
(\ref{eq:main}) under Heuristic \ref{heu:rma}. Let $n \in
\N_{\ge 1}$ and $d_i \in \N_{\ge 1}$ such that $d_i \mid n$ for every $1 \le
i \le M$. We consider $M$ machines simultaneously iterating a function on
$\Z/n\Z$. Each machine stops when it encounters a repeated value. The
function used by the $i$-th machine is drawn uniformly at
random from $\F_{n}^{\idg_i}$ and its initial value is drawn
uniformly from $\Z/n\Z$. The setup is represented by the space
$\Omega \colonequals \prod_{i = 1}^M \paren*{
  \Z/\nd\Z \times \mathcal F_{\nd}^{\idg_i}
}$,
equipped with the uniform probability measure.

\begin{theorem}
  For $1 \le i \le M$, let $\lambda_i \in (0, \infty)$ and define random
  variables $\tv_i : \Omega \to \R$ by $\tv_i((x_1, g_1), \dots,
  (x_M, g_M)) = \tf_i \rho(g_i, x_i)$. Then
  \begin{align}
    \E \paren*{ \min_{i = 1}^M \tv_i }
    \sim \sqrt{\pi \nd / 2}
    \paren*{
      \sum_{i = 1}^M \frac{\idg_i - 1}{\tf_i^2}
    }^{\!\!-1/2}
    \label{eq:general}
  \end{align}
  as $n \to \infty$.
  \label{th:main}
\end{theorem}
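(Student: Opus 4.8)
The plan is to reduce the statement to an exact formula for the tail of the rho length of a single random map in $\F_n^d$, and then to determine the limit of the resulting rescaled integral. Throughout I may assume at least one $d_i \ge 2$ — this is automatic in the application, where $p$ is odd and $2k_i$ even — since a machine with $d_i = 1$ just contributes the factor $1$ to all probabilities below and the term $0$ to the sum, so it can be dropped. The crucial step is to prove that, for $g$ uniform in $\F_n^d$ and $x$ uniform in $\Z/n\Z$,
\[ \mathbb{P}\bigl(\rho(g,x) > k\bigr) = \frac{n-k}{n}\prod_{i=1}^{k-1}\frac{n-id}{n-i} \]
for $0 \le k \le n/d$, the probability vanishing for larger $k$. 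Since conjugation by permutations of $\Z/n\Z$ preserves $\F_n^d$ and the uniform law on $\Z/n\Z$ is permutation-invariant, this probability equals, for any fixed $x_0$, the fraction of $g \in \F_n^d$ for which the iterates $x_0, g(x_0), \dots, g^k(x_0)$ are pairwise distinct. Such $g$ are in bijection with pairs consisting of an injective path $x_0 \to x_1 \to \dots \to x_k$ — of which there are $(n-1)(n-2)\cdots(n-k)$ — together with an extension of the partial map $x_{i-1}\mapsto x_i$ ($1 \le i \le k$) to an element of $\F_n^d$; the number of such extensions does not depend on the path (again by symmetry) and is counted directly, by choosing the remaining $n/d-k$ image elements and then distributing the $n-k$ undetermined preimages among the $k$ image points that already have one preimage and the $n/d-k$ that have none. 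Dividing by $|\F_n^d|$ and cancelling factorials yields the displayed product.

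From this formula two things follow. Taking logarithms and using $\log(1-x) = -x + O(x^2)$ together with $\sum_{i=1}^{k-1} i \sim k^2/2$ shows that for each fixed $t > 0$ and each $d \ge 2$,
\[ \mathbb{P}\bigl(\rho(g,x) > t\sqrt n\bigr) \longrightarrow e^{-(d-1)t^2/2} \qquad (n \to \infty), \]
the factor $(n-k)/n$ and the second-order terms being negligible on the scale $k = \Theta(\sqrt n)$. On the other hand, the inequality $\frac{n-id}{n-i} \le 1 - \frac{(d-1)i}{n} \le e^{-(d-1)i/n}$, valid for $1 \le i \le n/d$, gives the uniform estimate
\[ \mathbb{P}\bigl(\rho(g,x) > k\bigr) \le \exp\!\Bigl(-\tfrac{(d-1)(k-1)_+^2}{2n}\Bigr) \]
for every $k \ge 0$ and every admissible $n$, where $(k-1)_+ \colonequals \max(k-1,0)$ and the case $k > n/d$ is trivial.

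Now combine the machines. The coordinates of $\Omega$ are independent and $\min_{i} X_i \ge 0$, so
\[ \E\Bigl(\min_{i=1}^M X_i\Bigr) = \int_0^\infty \prod_{i=1}^M \mathbb{P}\bigl(\rho(g_i,x_i) > s/\lambda_i\bigr)\,ds = \sqrt n \int_0^\infty \prod_{i=1}^M \mathbb{P}\bigl(\rho(g_i,x_i) > t\sqrt n/\lambda_i\bigr)\,dt . \]
By the pointwise limit above (a machine with $d_i = 1$ simply contributes the constant factor $1$), the integrand tends for each $t > 0$ to $\prod_i e^{-(d_i-1)t^2/(2\lambda_i^2)} = e^{-ct^2/2}$ with $c \colonequals \sum_{i=1}^M (d_i-1)/\lambda_i^2 \in (0,\infty)$; fixing an index $i_0$ with $d_{i_0}\ge 2$, the uniform estimate for the $i_0$-th factor dominates the integrand on $[1,\infty)$ by a fixed integrable function $e^{-c't^2}$ (for a suitable $c'>0$) once $n$ is large, while on $[0,1]$ the integrand is at most $1$. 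Dominated convergence on $[1,\infty)$ and bounded convergence on $[0,1]$ then give $\int_0^\infty \prod_i \mathbb{P}(\cdots)\,dt \to \int_0^\infty e^{-ct^2/2}\,dt = \sqrt{\pi/(2c)}$, and therefore $\E(\min_{i=1}^M X_i) \sim \sqrt{\pi n/2}\,\bigl(\sum_{i=1}^M (d_i-1)/\lambda_i^2\bigr)^{-1/2}$, which is (\ref{eq:general}).

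The main obstacle, I expect, is the exact tail formula itself: setting up the count so that orbits with more than $k$ distinct iterates correspond bijectively to (injective path, extension) pairs, and checking that the number of extensions is path-independent and collapses to a single multinomial. Once that formula is in hand, the remaining estimates are essentially routine; the one delicate point is that the formula yields a tail bound valid for all $k$ at once, which is exactly what prevents a vanishing fraction of the probability mass from escaping to scales far beyond $\sqrt n$ and thereby spoiling the passage from the distributional limit to the limit of the means in (\ref{eq:general}).
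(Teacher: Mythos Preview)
Your argument is correct and takes a genuinely different route from the paper. The paper works with the point probabilities $P(S_i=s)$ (quoted from Brent--Pollard), replaces them by Rayleigh densities $Q_{q_i}(s)=(s/q_i)e^{-s^2/(2q_i)}$, turns the $M$ nested sums into $M$ nested integrals via Euler--Maclaurin (Lemma~\ref{lemma:sum-integral} and Lemma~\ref{lemma:ryl-approximated-formula}), and then separately justifies the density replacement by splitting each sum at $s\approx q_i^{9/16}$ and bounding $|P(S_i=s)-Q_{q_i}(s)|$ in two regimes (Lemma~\ref{lemma:pq-diff}).

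You instead derive an \emph{exact} closed-form tail
\[
\mathbb P\bigl(\rho(g,x)>k\bigr)=\frac{n-k}{n}\prod_{i=1}^{k-1}\frac{n-id}{n-i}
\]
directly by counting in $\F_n^d$, and then use the layer-cake identity $\E[\min_i X_i]=\int_0^\infty \prod_i \mathbb P(S_i>s/\lambda_i)\,ds$ together with dominated convergence. The combinatorial derivation is sound: the bijection between admissible $g$'s and (injective path, extension) pairs is clean, the path-independence of the extension count follows from the $S_n$-action by conjugation on $\F_n^d$, and the factorial cancellation does yield the displayed product (the $i=0$ factor is $1$, so your index range is fine). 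Your uniform bound $\frac{n-id}{n-i}\le 1-\frac{(d-1)i}{n}\le e^{-(d-1)i/n}$ is correct and gives exactly the Gaussian-type envelope needed for DCT; restricting the domination to one factor with $d_{i_0}\ge 2$ is the right way to handle machines with $d_i=1$.

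What each approach buys: yours is shorter and more elementary (no Euler--Maclaurin, no separate density-approximation lemma), and it is fully self-contained --- you never invoke the Brent--Pollard conditional probability, which in fact is not exactly true for uniform $g\in\F_n^d$ (e.g.\ $P(S=1)=1/n$, not $(d-1)/n$), though the discrepancy is lower-order and does not affect the paper's asymptotic. The paper's route, on the other hand, produces an explicit $O(1)$ error term in Lemma~\ref{lemma:ryl-approximated-formula}, which your DCT argument does not immediately yield; but the theorem only asserts $\sim$, so nothing is lost. One small point to make explicit when you write this up: for non-integer $t\sqrt n/\lambda_i$ you are silently using $\mathbb P(\rho>a)=\mathbb P(\rho>\lfloor a\rfloor)$, which shifts $k$ by at most $1$ and is harmless both for the pointwise limit and for the envelope.
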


One may interpret $\lambda_i$ as the time per iteration of the $i$-th
machine, and $\tv_i$ as the time required by the $i$-th machine to
terminate. Formula (\ref{eq:main}) follows from Theorem \ref{th:main}
by taking $\nd = p - 1$, $\idg_i = \gcd(p - 1, 2k_i)$ and $\tf_i =
\log_2 2k_i$. We choose $\tf_i = \log_2 2k_i$, because calculating
the power $x^{2k_i}$ in each iteration requires $\log_2 2k_i$ steps
(see §\ref{sec:time-per-it} for more details).

\subsection{Without information about \emph{p}}

To apply (\ref{eq:main}), we need to know $\gcd(p - 1, 2k_i)$ for
every $1 \le i \le M$. But often such information is not available
when trying to factor a large number. This raises a natural question:
Which $k_i$ should be chosen if nothing about $p$ is known?

To address this question, we consider all possible congruences of $p$
modulo $\lcmki \colonequals \lcm \{2k_i : 1 \le i \le M\}$, since we
already know the expected running time for a fixed congruence
class from (\ref{eq:main}). Since $p$
is prime, the possible congruence classes are
exactly the units in $\Z/\lcmki\Z$ (we may assume $p \nmid \lcmki$,
since $p$ is much larger than $\lcmki$ in all cases of interest).
Dirichlet's theorem implies that all these congruence classes occur
with equal frequency for sufficiently large $p$ (see e.g.
\cite[p.~251, Theorem 1]{ir90}). So we define
\begin{align}
  \grt(k_1, \dots, k_M) \colonequals
  \frac 1 {\phi(\lcmki)}
  \sum_{d \in \paren* {\Z/\lcmki\Z}^\times}
  \paren*{ \sum_{i = 1}^M
    \frac {\gcd(d - 1, 2k_i) - 1} {\log_2^2 2k_i}
  }^{\!\! -1/2}.
  \label{eq:def-grt}
\end{align}
By the above discussion, $\grt(k_1, \dots, k_M)
\sqrt{\pi p /2}$ can be thought of as the expected time to find the
prime factor $p$, when no information about $p$ is available. The
term $\sqrt{\pi p / 2}$ of (\ref{eq:main}) was omitted in
(\ref{eq:def-grt}), because it is irrelevant for comparing different
parameterizations.

The choice of $k_i$ that minimizes $\grt(k_1, \dots, k_M)$ could not be
identified for general $M$. Numerical computations for small $M$ and
$k_i$ suggest that $k_1 = \cdots = k_M = 1$ is optimal for every $M
\ge 1$ (see §\ref{sec:table-of-values} and
§\ref{sec:opt-k-general-M}). For $M = 1$, we prove this rigorously.

\begin{theorem}
  \label{th:M1}
  $\grt(1) < \grt(k)$ for every $k \in \N_{\ge 2}$.
\end{theorem}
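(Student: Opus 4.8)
The plan is to recast $\grt(1)<\grt(k)$ as an explicit inequality, bound $\grt(k)$ below by a product over the primes dividing $2k$, and then close the gap with a short case check plus a monotonicity argument. First, $\grt(1)=1$: modulo $\lcmki=2$ the only unit is $1$, and $\gcd(1-1,2)-1=1$. So fix $k\ge 2$, set $n\colonequals 2k\ge 4$, and recall from \eqref{eq:def-grt} that
\[
  \grt(k)=\frac{\log_2 n}{\phi(n)}\sum_{d\in(\Z/n\Z)^\times}\bigl(\gcd(d-1,n)-1\bigr)^{-1/2}.
\]
Since $n$ is even, every unit $d$ is odd, so $\gcd(d-1,n)\ge 2$ and each summand is well defined; and since $\gcd(d-1,n)-1<\gcd(d-1,n)$, it suffices to prove
\[
  \frac{\log_2 n}{\phi(n)}\sum_{d\in(\Z/n\Z)^\times}\gcd(d-1,n)^{-1/2}>1\qquad\text{for }n=2k,\ k\ge 2.
\]

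By the Chinese remainder theorem $\gcd(d-1,n)=\prod_{p^a\|n}\gcd(d-1,p^a)$, so $n\mapsto\phi(n)^{-1}\sum_d\gcd(d-1,n)^{-1/2}$ is multiplicative, and grouping the units of $\Z/p^a\Z$ by the exact power of $p$ dividing $d-1$ gives
\[
  \frac{1}{\phi(p^a)}\sum_{d\in(\Z/p^a\Z)^\times}\gcd(d-1,p^a)^{-1/2}=\frac{p-2}{p-1}+\sum_{j=1}^{a-1}p^{-3j/2}+\frac{p^{1-3a/2}}{p-1},
\]
the term $\tfrac{p-2}{p-1}$ being read as $0$ for $p=2$. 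As a function of $a$ this is strictly decreasing, hence at least its limit $c_p\colonequals\frac{p-2}{p-1}+\frac{1}{p^{3/2}-1}$; in particular $c_2=\tfrac{1}{2\sqrt 2-1}$. Writing $2^a\|n$ (so $a\ge 1$) and letting $m$ be the product of the odd primes dividing $n$, we obtain
\[
  \grt(k)>\log_2 n\prod_{p\mid n}c_p\ \ge\ (a+\log_2 m)\,c_2\prod_{q\mid m}c_q.
\]

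It remains to show the right-hand side exceeds $1$. If $m=1$ then $n=2^a$ with $a\ge 2$, and the bound is $\ge 2c_2=\tfrac{2}{2\sqrt 2-1}>1$. If $m>1$, then since $q\mapsto c_q$ is increasing, both $\log_2 m$ and $\prod_{q\mid m}c_q$ are minimized, among squarefree odd $m$ with a given number $s$ of prime divisors, by $m=\Pi_s\colonequals q_1\cdots q_s$, the product of the $s$ smallest odd primes $q_1=3,q_2=5,\dots$; together with $a\ge 1$ this reduces everything to proving $A_s\colonequals(1+\log_2\Pi_s)\,c_2\prod_{i=1}^{s}c_{q_i}>1$ for every $s\ge1$. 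For $s$ up to a small explicit bound this is a direct numerical check; the tightest case is $s=1$, where $A_1=(1+\log_2 3)\,c_2c_3\approx 1.044$. For larger $s$ one checks that the sequence $(A_s)$ is nondecreasing, using that $A_{s+1}/A_s=\bigl(1+\tfrac{\log_2 q_{s+1}}{1+\log_2\Pi_s}\bigr)c_{q_{s+1}}\ge 1$ follows from $1-c_q\le\tfrac1{q-1}$, $c_q\ge\tfrac12$, the identity $1+\log_2\Pi_s=\vartheta(q_s)/\ln 2$, and an effective Chebyshev bound $\vartheta(x)\le Cx$, once $\ln q_s\ge 2C$.

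The reduction and the prime-power identity are routine; the last step is where the real work lies. Two things need care. The small cases are genuinely tight --- for $n$ of the form $2^a3^b$ the slack in $\grt(k)>1$ is only a few percent --- so $\log_2 3$, $\sqrt 2$, $\sqrt 3$ and $c_3$ must be estimated to sufficient precision. And the tail relies on an explicit constant $C$ with $\vartheta(x)\le Cx$: Chebyshev's elementary $C=\ln 4$ suffices (at the price of checking a few more small $s$ before monotonicity takes over), while the Rosser--Schoenfeld bound $C<1.02$ already makes $(A_s)$ nondecreasing from $s=4$ on. Verifying these two points cleanly is, I expect, the main obstacle.
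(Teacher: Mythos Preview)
Your approach is correct and genuinely different from the paper's. The paper lower-bounds $\grt(k)$ by keeping only the single summand $g=2$ in the divisor sum \eqref{eq:grt-over-g}, obtaining
\[
  \grt(k)\ \ge\ \frac{\log_2 2k}{2}\prod_{p\mid k,\,p\ne 2}\Bigl(1-\frac{1}{p-1}\Bigr),
\]
and then controls that product via a Taylor bound on $\log(1+x)$, Rosser's $p_i>i\log i$, and the integral $\int dx/(x\log x)$, arriving at $\grt(k)>\dfrac{\log_2 2k}{2e^{3/2}\log\log k}$. This exceeds $1$ only from about $k=2.1\cdot 10^7$ on, so the paper finishes with a computer verification of all $k$ below that threshold. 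Your route instead weakens $(\gcd-1)^{-1/2}$ to $\gcd^{-1/2}$ to gain multiplicativity, computes the local factors exactly, and bounds each by its limit $c_p$; the resulting product bound is tighter (your worst case $A_1\approx 1.044$ for $n=2\cdot 3^b$ versus the paper's much looser tail), so the residual numerical check shrinks from millions of $k$ to a handful of $A_s$ values. What the paper's argument buys is simplicity of the analytic step (a single product of $1-\tfrac1{p-1}$), at the price of a large computation; what yours buys is a near-hand-checkable proof, at the price of the prime-power identity and the monotonicity argument for $(A_s)$. Both rely on an effective Chebyshev-type input, and both must confront the genuinely tight cases $n=2^a3^b$; you simply confront them earlier and more directly.
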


\subsection{Structure of the paper}

Section §\ref{sec:prelim} introduces the notation and definitions,
primarily for the proof of Theorem \ref{th:main}. The proof of
Theorem \ref{th:main} is carried out in §\ref{sec:int-approx} and
§\ref{sec:ryl-approx}. In §\ref{sec:opt-k-M1}, we prove Theorem
\ref{th:M1}. Section §\ref{sec:table-of-values} presents computed
values of $G(k_1, \dots, k_M)$ and discusses their behavior. We
conclude with final remarks in §\ref{sec:final-remarks}.

\subsection*{Acknowledgements}

I thank Yannick Sommer for a helpful discussion about the proof of
Theorem \ref{th:M1}.

\section{Preliminaries}
\label{sec:prelim}

As in §\ref{sec:general-formulation}, assume that $M, \nd$ and
$\idg_i$ are nonzero natural numbers such that $\idg_i \mid n$ for $i =
1, \dots, M$. Define $\Omega \colonequals \prod_{i =
1}^M \paren*{ \Z/\nd\Z \times \fns_{\nd}^{\idg_i} }$
with the uniform probability measure $P$. On $\Omega$, let $\stv_i :
\Omega \to \N$ be random variables given by
\begin{align}
  \stv_i : ((x_1, g_1), \dots, (x_M, g_M)) \mapsto \rho(g_i, x_i) \, .
  \label{eq:def-rand-var}
\end{align}
Like in Theorem \ref{th:main}, we also let $\tf_i \in (0, \infty)$ for
$1 \le i \le M$ and define $\tv_i = \tf_i \stv_i$.

Let $\eff_i \colonequals \nd/(\idg_i - 1)$. Brent and Pollard showed
in \cite{bp81} that
\begin{align*}
  P(\stv_i = \st \, | \, \stv_i \ge \st) = \st(\idg_i - 1)/\nd = \st/\eff_i \,.
\end{align*}
Thus
\begin{align}
  P(\stv_i = \st) =
  \begin{cases}
    \frac {\st} {\eff_i} \prod_{j = 0}^{\st - 1} \paren*{
      1 - \frac j {\eff_i}
    } & \st \le  \eff_i \\
    0 & \st > \eff_i \\
  \end{cases} \, ,
  \label{eq:collision-probability}
\end{align}
because in the first $\st - 1$ steps, no collision may occur, and in
the last step, a collision must occur.

Using the definitions in (\ref{eq:def-rand-var}) and
(\ref{eq:collision-probability}), the left hand side of
(\ref{eq:general}) can be expressed as
\begin{align}
  \E \paren*{\min_{i = 1}^M \tv_i }
  = \sum_{\st_1 = 0}^\infty P(\stv_1 = \st_1) \, \cdots \!
  \sum_{\st_M = 0}^\infty P(\stv_M = \st_M) \min_{i = 1}^M (\tf_i \st_i) \, .
  \label{eq:expectation-expr}
\end{align}
The dots in the middle represent the sums over $\st_i$ for $i = 2,
\dots, M - 1$.

As a motivation for the next definition, we approximate $1 -
j/\eff_i$ by $e^{-j/\eff_i}$ in (\ref{eq:collision-probability}). This yields
\begin{align*}
  P(\stv_i = \st) \approx \frac {\st} {\eff_i} e^{-\st(\st - 1)/(2\eff_i)}
  \approx \frac {\st} {\eff_i} e^{-\st^2/(2\eff_i)} \, .
\end{align*}
The right expression is more manageable than the product in
(\ref{eq:collision-probability}) and allows us to find an asymptotic
estimate for (\ref{eq:expectation-expr}).
Section §\ref{sec:ryl-approx} is devoted to proving that this
approximation can be applied to (\ref{eq:expectation-expr}) in a
rigorous way. For $\eff \in (0, \infty)$, define $\ryl_\eff :
(0, \infty) \to \R$ by
\begin{align}
  \ryl_\eff(\st) = \frac \st \eff e^{-\st^2/(2\eff)}.
  \label{eq:def-q}
\end{align}
Note that $\ryl_{\eff}$ is the density function of a
Rayleigh-distributed random variable with parameter $\sqrt{\eff}$.

\section{Asymptotic estimate of the Rayleigh-approximated version}
\label{sec:int-approx}

The proof of Theorem \ref{th:main} is structured as follows: To
estimate (\ref{eq:expectation-expr}) asymptotically, we first
replace $P(\stv_i = \st_i)$ by the Rayleigh densities
$\ryl_{\eff_i}(\st_i)$, yielding
\begin{align}
  \rya_{\idg_1, \dots, \idg_M, \tf_1, \dots, \tf_M}(\nd) \colonequals
  \sum_{\st_1 = 0}^{\infty} \ryl_{\eff_1}(\st_1) \, \cdots \!
  \sum_{\st_M = 0}^{\infty} \ryl_{\eff_M}(\st_M) \min_{i = 1}^M (\tf_i \st_i)
  \, .
  \label{eq:def-ryl-approximated}
\end{align}
When unambiguous in the context, we abbreviate $\rya_{\idg_1, \dots,
\idg_M, \tf_1, \dots, \tf_M}(\nd)$ by $\rya_{\idg_i,
\tf_i}(\nd)$. In this section, we derive an asymptotic estimate for
$\rya_{\idg_i, \tf_i}$, which will be exactly the right hand side of
(\ref{eq:general}). In §\ref{sec:ryl-approx}, we show that
\begin{align*}
  \E \paren*{\min_{i = 1}^M \tv_i } \sim \rya_{\idg_i, \tf_i} \,,
\end{align*}
thereby completing the proof of Theorem \ref{th:main}.

To evaluate $\rya_{\idg_i, \tf_i}$, we wish to replace the sums by
integrals. In Lemma \ref{lemma:sum-integral}, the
Euler-Maclaurin formula is applied to show that this substitution
preserves the asymptotic behavior in the case $M = 1$. We allow a
slightly more general
class of functions than necessary, which allows us to extend the
result to $M \ge 2$ later. For $a \in \N$ and $b \in \N \cup
\{\infty\}$, define $I(a, b) = [a, b]$ if $b < \infty$ and $I(a, b) =
[a, b)$ if $b = \infty$.

\begin{lemma}
  Let $a \in \N$ and $b \in \N \cup \{\infty\}$ such that $a \le b$.
  Let $g : I(a, b) \to \R$ be a piecewise linear function, which is
  differentiable at all but finitely many points.  Then
  \begin{align}
    \sum_{\st = a}^b g(\st) \ryl_\eff(\st)
    - \int_a^b g(\st)\ryl_\eff(\st) \, d\st
    = O \paren*{ 1 }
    \label{eq:lemma-eumac}
  \end{align}
  as $\eff \to \infty$. For fixed $g$, this holds
  uniformly with respect to $a$ and $b$. That is, there exist two
  $O$-constants that work
  for every pair $a \le b$.
  \label{lemma:sum-integral}
\end{lemma}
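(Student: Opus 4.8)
The plan is to apply the Euler–Maclaurin summation formula to the function $h(\st) \colonequals g(\st)\ryl_\eff(\st)$ on the interval $I(a,b)$, and show that the resulting correction terms are all $O(1)$ uniformly in $a$ and $b$ as $\eff \to \infty$. Recall the first-order Euler–Maclaurin formula: for a function $h$ that is piecewise $C^1$ on $[a,b]$ (with $b$ finite for now),
\begin{align*}
  \sum_{\st = a}^b h(\st) - \int_a^b h(\st)\,d\st
  = \frac{h(a) + h(b)}{2} + \int_a^b \brak*{\st} \, h'(\st)\,d\st,
\end{align*}
where $\brak*{\st}$ here denotes the first Bernoulli polynomial $\{\st\} - 1/2$ (or one may use the sawtooth directly). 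Since $\lvert \brak*{\st} \rvert \le 1/2$ pointwise, the integral remainder is bounded by $\frac12 \int_a^b \lvert h'(\st)\rvert\,d\st$. So the whole problem reduces to two things: bounding the boundary term $\frac{h(a)+h(b)}{2}$, and bounding the total variation $\int_a^b \lvert h'(\st)\rvert\,d\st$, both uniformly in $a, b$ and with $O(1)$ dependence on $\eff$.

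For the boundary term, note $\lvert h(\st) \rvert = \lvert g(\st)\rvert \cdot \frac{\st}{\eff} e^{-\st^2/(2\eff)}$. Since $g$ is piecewise linear, $\lvert g(\st)\rvert \le c_1 + c_2 \st$ for constants depending only on $g$; and $\frac{\st}{\eff} e^{-\st^2/(2\eff)} \le \frac{1}{\sqrt{\eff}}$ for all $\st \ge 0$ while $\frac{\st^2}{\eff} e^{-\st^2/(2\eff)}$ is bounded by an absolute constant (maximize $u e^{-u/2}$ over $u \ge 0$). Hence $\lvert h(\st) \rvert = O(1)$ uniformly over all $\st \in [a, \infty)$, so the boundary term is $O(1)$ regardless of where $a$ and $b$ sit; this also handles the $b = \infty$ case since $h(\st) \to 0$. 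For the total-variation term, I would compute $h'(\st) = g'(\st)\ryl_\eff(\st) + g(\st)\ryl_\eff'(\st)$ on each linear piece of $g$, where $\ryl_\eff'(\st) = \paren*{\frac{1}{\eff} - \frac{\st^2}{\eff^2}} e^{-\st^2/(2\eff)}$, and add the finitely many jump contributions of $g$ at its breakpoints (each jump is fixed by $g$ and multiplied by $\ryl_\eff$, which is bounded, so those contribute $O(1)$). Then I split $\int_a^\infty \lvert h'(\st)\rvert\,d\st \le \int_0^\infty \lvert h'(\st)\rvert\,d\st$ and bound each term: $\int_0^\infty \lvert g'(\st)\rvert \ryl_\eff(\st)\,d\st \le (\sup \lvert g' \rvert) \int_0^\infty \ryl_\eff(\st)\,d\st = \sup\lvert g'\rvert$ since $\ryl_\eff$ is a probability density, and $\int_0^\infty \lvert g(\st)\rvert \cdot \lvert \ryl_\eff'(\st)\rvert\,d\st$ is handled by the substitution $u = \st/\sqrt{\eff}$, which turns it into an $\eff$-independent convergent integral of the form $\int_0^\infty (c_1 + c_2 u\sqrt{\eff}) \cdot \frac{1}{\sqrt{\eff}}\lvert 1 - u^2\rvert e^{-u^2/2}\,du$, giving again $O(1)$.

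The main obstacle I anticipate is the bookkeeping around the piecewise-linear structure of $g$: the Euler–Maclaurin formula in its clean form wants a globally $C^1$ integrand, so I need to either apply it separately on each maximal linear piece of $g$ and sum (tracking that the boundary terms from adjacent pieces telescope, leaving only genuinely new contributions at the finitely many breakpoints plus the two endpoints $a, b$), or invoke a version of Euler–Maclaurin valid for functions of bounded variation on $[a,b]$. Either way the key quantitative point — that all error contributions are controlled by $\sup\lvert g\rvert$-type and $\sup\lvert g'\rvert$-type quantities times integrals of $\ryl_\eff$ or $\lvert\ryl_\eff'\rvert$ that are $O(1)$ in $\eff$ after rescaling — is robust. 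The uniformity in $a$ and $b$ then falls out because every bound was obtained by enlarging the domain of integration to $[0,\infty)$ and using pointwise bounds on $\lvert h\rvert$, none of which reference $a$ or $b$.
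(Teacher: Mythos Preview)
Your approach is correct and somewhat cleaner than the paper's. The paper proceeds by first reducing to the case where $g$ is genuinely linear (splitting at the finitely many corners, with some bookkeeping around $\lceil u_j\rceil$), and then applies the \emph{second}-order Euler--Maclaurin formula, bounding the boundary terms $h(a),h(b),h'(a),h'(b)$ pointwise and the remainder $R_2=\int B_2(\{\st\})h''(\st)\,d\st/2$ by explicitly computing $h''$ and integrating each term. You instead use the \emph{first}-order formula and bound the remainder by $\tfrac12\int_0^\infty|h'(\st)|\,d\st$; since $h$ is absolutely continuous (being the product of a Lipschitz function and a smooth function), the formula applies directly without first reducing to the linear case, and the rescaling $u=\st/\sqrt{\eff}$ immediately shows $\int_0^\infty|g(\st)|\,|\ryl_\eff'(\st)|\,d\st=O(1)$. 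This avoids computing $h''$ and the preliminary reduction, at no cost in strength.

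One minor clarification: in the paper's setting $g$ is continuous (this is how ``piecewise linear'' is used, and it is what the application to $\min_i(\lambda_i s_i)$ requires), so there are no ``jump contributions of $g$ at its breakpoints'' to track. The only discontinuities are in $g'$, and those are irrelevant for the first-order remainder $\int(\{\st\}-1/2)h'(\st)\,d\st$ since $h'$ is bounded and integrable. Your anticipated obstacle therefore does not arise: the first-order Euler--Maclaurin formula holds verbatim for absolutely continuous $h$ on $[a,b]$, and your total-variation bound on $[0,\infty)$ gives uniformity in $a,b$ immediately.
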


\begin{proof}
  We make two quick reductions. First, we may assume that $b <
  \infty$, because we will prove that
  the asymptotic estimate holds uniformly with respect to $a, b \in
  \N$. Thus it also holds in the limit $b \to \infty$.

  Second, we reduce to the case that $g$ is linear. For this, let
  $u_1 < \cdots < u_l$ be the non-differentiable points of $g$ in
  $(a, b)$. Define $u_0 \colonequals a$ and $u_{l + 1} \colonequals b + 1$. Then
  \begin{align*}
    &\sum_{\st = a}^b g(\st) \ryl_\eff(\st) - \int_a^b g(\st)\ryl_\eff(\st) \\
    &= \sum_{j = 0}^l \paren*{
      \smashoperator[r]{
        \sum_{s = \lceil u_j \rceil}^{\lceil{u_{j + 1}} \rceil - 1}
      }
      \ g(\st) \ryl_\eff(\st) \,
      - \, \smashoperator[lr]{
        \int_{\lceil u_j \rceil}^{\lceil{u_{j + 1}} \rceil - 1}
      }
    g(\st) \ryl_\eff(\st) \, d\st }
    - \sum_{j = 1}^l \! \! \!
    \smashoperator[r]{\int_{\ \ \, \lceil u_j \rceil - 1}^{\lceil{u_j} \rceil }}
    g(\st) \ryl_\eff(\st) \, d\st \,.
  \end{align*}
  Because $g$ is linear on every interval $[\lceil u_j \rceil,
  \lceil{u_{j + 1}} \rceil - 1]$, the left sum in the
  second row is bounded by $O(\eff^{-1/2})$ if (\ref{eq:lemma-eumac})
  holds for linear functions.
  The right sum is $O(\eff^{-1})$, because $\ryl_\eff(\st) =
  O(\eff^{-1})$ as $\eff \to \infty$ for every $\st \in [0, \infty)$.
  Thus, it suffices to prove (\ref{eq:lemma-eumac}) in the
  case that $g$ is linear.

  For the sake of shorter notation, let $h(\st) \colonequals
  g(\st)\ryl_\eff(\st)$. By
  applying the second derivative version of the Euler-Maclaurin formula, we get
  \begin{align}
    \sum_{\st = a}^b h(\st)
    = \int_a^b h(\st) \, d\st
    + \frac {h(a) + h(b)} 2
    + \frac {B_2} 2 \paren*{h'(b) - h'(a) }
    + R_2 \,,
    \label{eq:euler-maclaurin}
  \end{align}
  where $B_2$ is the second Bernoulli number and
  $R_2$ the remainder term. The second summand of
  (\ref{eq:euler-maclaurin}) expands to
  \begin{align}
    \frac {h(a) + h(b)} 2
    = \frac {g(a) ae^{-a^2/(2\eff)} + g(b) b e^{-b^2/(2\eff)}} {2\eff} \,.
    \label{eq:ha-hb}
  \end{align}
  Note that $\st \mapsto (\st^2/\eff) e^{-\st^2/(2\eff)}$ is bounded
  for $\st \in [0, \infty)$ and its maximal value is independent of
  $\eff$. Thus (\ref{eq:ha-hb}) is bounded independently of
  $\eff, a$ and $b$ for $q \ge 1$. As for the third summand of
  (\ref{eq:euler-maclaurin}), the derivative of $h$ is
  \begin{align*}
    h'(\st)
    = \paren*{\frac {\st g'(\st) + g(\st)} \eff
    - \frac {\st^2g(\st)} {\eff^2} } e^{-\st^2/(2\eff)} \,.
  \end{align*}
  Again using that $\st \mapsto (\st^2/\eff) e^{-\st^2/(2\eff)}$ is
  bounded independently of $\eff$, we see that $h'(a)$ and $h'(b)$
  are bounded independently of $\eff, a$ and $b$. Thus the third
  summand of (\ref{eq:euler-maclaurin}) is $O \paren*{ 1 }$.

  Lastly, the remainder term is
  \begin{align}
    R_2 = \int_a^b \frac {B_2(\st - \lfloor \st \rfloor)} 2
    \, h''(\st) \, d\st \,.
    \label{eq:eumac-remainder}
  \end{align}
  where $B_2$ is the second Bernoulli polynomial. The second
  derivative of $h$ is
  \begin{align*}
    h''(\st)
    &= \paren*{
      \frac {2g'(\st)} {\eff}
      - \frac {\st^2g'(\st)} {\eff^3}
      + \frac {\st^3g(\st)} {\eff^3}
      - \frac {3\st g(\st)} {\eff^2}
    }
    e^{-\st^2/(2\eff)} \,.
  \end{align*}
  Inserting this into (\ref{eq:eumac-remainder}) and
  applying the fact that $|B_2(\st)| \le 1$ for every $\st \in [0, 1]$, we get
  \begin{align}
    \vertbar*{R_2} \le
    \int_a^b
    \paren*{
      \vertbar*{ \frac {2g'(\st)} {\eff} }
      + \vertbar*{ \frac {\st^2g'(\st)} {\eff^3} }
      + \vertbar*{ \frac {\st^3g(\st)} {\eff^3} }
      + \vertbar*{ \frac {3\st g(\st)} {\eff^2} }
    }
    e^{-\st^2/(2\eff)} \, d\st \,.
    \label{eq:emac-remainder-bound}
  \end{align}
  We may assume that $g(\st) \ge 0$ and $g'(\st) \ge 0$ for every $\st \ge 0$.
  If this is not the case, we replace $g$ by a different linear function $\td
  g$ such that $\td g(\st) \ge |g(\st)|$ for $\st \ge 0$. Thus the absolute
  value bars in (\ref{eq:emac-remainder-bound}) may be removed.
  By integrating the four summands separately and using that
  $\int_0^\infty \st^l e^{-\st^2/(2\eff)} \, d\st = O \paren*{
  \eff^{(l + 1)/2} }$, it follows that $|R_2| = O \paren*{\eff^{-1/2}}$.
\end{proof}

Repeatedly applying Lemma \ref{lemma:sum-integral} to the nested sum in
(\ref{eq:def-ryl-approximated}) allows us to replace each sum by an integral.
This yields the following estimate for $\rya_{\idg_i, \tf_i}$.

\begin{lemma}
  As $n \to \infty$,
  \begin{align}
    \rya_{\idg_i,\tf_i}(\nd) = \sqrt{\pi \nd / 2} \, \paren*{ \sum_{i
      = 1}^M \frac
    {\idg_i - 1} {\tf_i^2} }^{\!\! -1/2} + O \paren*{ 1 } \,.
    \label{eq:ryl-approximated-formula}
  \end{align}
  \label{lemma:ryl-approximated-formula}
\end{lemma}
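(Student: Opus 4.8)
The plan is to evaluate $\rya_{\idg_i,\tf_i}(\nd)$ by peeling off the nested sums one at a time, converting each into an integral via Lemma \ref{lemma:sum-integral}, and then recognizing the resulting multidimensional integral as a computation about the minimum of independent Rayleigh variables. First I would fix the innermost structure: for a fixed tuple $(\st_1,\dots,\st_{M-1})$, the inner sum $\sum_{\st_M=0}^\infty \ryl_{\eff_M}(\st_M)\min_{i}(\tf_i\st_i)$ has the form $\sum_{\st_M} g(\st_M)\ryl_{\eff_M}(\st_M)$ where $g(\st_M) = \min\{\,\min_{i<M}(\tf_i\st_i),\ \tf_M\st_M\,\}$ is piecewise linear in $\st_M$ with a single breakpoint, hence covered by Lemma \ref{lemma:sum-integral}. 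Replacing it by $\int_0^\infty$ introduces an $O(1)$ error, uniformly in the remaining variables. Iterating this $M$ times — at each stage the partial sum/integral expression, viewed as a function of the next summation variable, is still piecewise linear with boundedly many breakpoints, since $\min$ of linear functions is piecewise linear — converts the whole nested sum into
\begin{align*}
  \int_0^\infty \ryl_{\eff_1}(\st_1)\,\cdots\int_0^\infty \ryl_{\eff_M}(\st_M)\,
  \min_{i=1}^M(\tf_i\st_i)\; d\st_M\cdots d\st_1 + O(1)\,.
\end{align*}
Here one has to be a little careful that the accumulated $O(1)$ errors stay $O(1)$: each application contributes a bounded error, but that error must be integrable against the remaining Rayleigh densities; since $\int_0^\infty \ryl_\eff = 1$ and the Lemma's constants are uniform, the errors simply add, giving a total of $O(1)$ with an $M$-dependent but $\nd$-independent constant.

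Next I would evaluate the integral exactly. Let $Y_i$ be independent with density $\ryl_{\eff_i}$, i.e. $Y_i$ Rayleigh with parameter $\sqrt{\eff_i}$; then $\tf_i Y_i$ is Rayleigh with parameter $\tf_i\sqrt{\eff_i}$, equivalently $(\tf_i Y_i)^2$ is exponential with rate $1/(2\tf_i^2\eff_i)$. The integral above is exactly $\E(\min_i \tf_i Y_i)$. Since $\min_i \tf_i Y_i \ge t$ iff $\tf_i Y_i \ge t$ for all $i$, and $P(\tf_i Y_i \ge t) = e^{-t^2/(2\tf_i^2\eff_i)}$, we get
\begin{align*}
  \E\Bigl(\min_{i=1}^M \tf_i Y_i\Bigr)
  = \int_0^\infty \exp\Bigl(-\frac{t^2}{2}\sum_{i=1}^M \frac{1}{\tf_i^2\eff_i}\Bigr)\,dt
  = \sqrt{\frac{\pi}{2}}\,\Bigl(\sum_{i=1}^M \frac{1}{\tf_i^2\eff_i}\Bigr)^{\!-1/2}.
\end{align*}
Substituting $\eff_i = \nd/(\idg_i-1)$ gives $\sum_i 1/(\tf_i^2\eff_i) = \nd^{-1}\sum_i(\idg_i-1)/\tf_i^2$, so the integral equals $\sqrt{\pi\nd/2}\,\bigl(\sum_i(\idg_i-1)/\tf_i^2\bigr)^{-1/2}$, which is precisely the main term of \eqref{eq:ryl-approximated-formula}. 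Combining with the $O(1)$ error from the sum-to-integral step finishes the proof.

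The main obstacle I anticipate is the bookkeeping in the iterated application of Lemma \ref{lemma:sum-integral}: one must check at each stage that the function multiplying the next Rayleigh density is genuinely piecewise linear (so the Lemma applies) with a number of breakpoints bounded independently of $\nd$, and that the errors accumulate additively rather than multiplicatively. The cleanest way to handle this is induction on $M$: assume the nested sum/integral over $\st_2,\dots,\st_M$ equals, for each fixed $\st_1$, a piecewise linear function of $\st_1$ plus an $O(1)$ error uniform in $\st_1$ and in the breakpoints; then one more application of Lemma \ref{lemma:sum-integral} (using its uniformity in $a,b$, here with $a=0$, $b=\infty$) closes the induction. Everything else is a routine Gaussian-integral computation.
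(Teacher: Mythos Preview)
Your overall plan is the paper's plan: convert the nested sums to integrals via repeated use of Lemma~\ref{lemma:sum-integral}, then recognise the $M$-fold integral as $\E\min_i(\tf_iY_i)$ for independent Rayleigh $Y_i$ and evaluate it (the tail-probability computation you give is equivalent to the paper's ``the minimum of Rayleighs is Rayleigh'' remark). So the architecture is right, and the final integral evaluation is fine.

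The gap is in the induction that carries out the sum-to-integral replacement. You propose the hypothesis that, for fixed $\st_1$, the nested sum/integral over $\st_2,\dots,\st_M$ is a \emph{piecewise linear} function of $\st_1$ (up to $O(1)$), so that Lemma~\ref{lemma:sum-integral} can be applied once more. That hypothesis is false. Already for $M=2$, after integrating out $\st_2$ one gets
\[
\int_0^\infty \ryl_{\eff_2}(\st_2)\,\min(\tf_1\st_1,\tf_2\st_2)\,d\st_2
= \tf_1\st_1\bigl(1-e^{-(\tf_1\st_1)^2/(2\tf_2^2\eff_2)}\bigr)
+\tf_2\sqrt{\pi\eff_2/2}\,\mathrm{erfc}\!\left(\tfrac{\tf_1\st_1}{\tf_2\sqrt{2\eff_2}}\right),
\]
which is smooth but nowhere piecewise linear in $\st_1$; Lemma~\ref{lemma:sum-integral} as stated does not apply to it. Your justification ``$\min$ of linear functions is piecewise linear'' is a statement about $g=\min_i(\tf_i\st_i)$ itself, not about its partial integrals. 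The paper avoids this by never evaluating the inner integral: after replacing $\sum_{\st_M}$ by $\int d\st_M$, it uses absolute convergence to pull $\int d\st_M$ \emph{outside} all the remaining sums, and then applies the induction hypothesis to the $(M-1)$-variable function $(\st_1,\dots,\st_{M-1})\mapsto g(\st_1,\dots,\st_{M-1},\st_M)$ with $\st_M$ held fixed. That function is still a minimum of finitely many linear forms, hence genuinely piecewise linear with a bounded number of breakpoints, so Lemma~\ref{lemma:sum-integral} applies at every stage. If you reorganise your induction this way your proof goes through; as written, the step ``still piecewise linear'' fails.
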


\begin{proof}
  Let $g : [0, \infty)^M \to \R$ be a function which is piecewise linear with
  finitely many non-differentiable points when any $M - 1$
  coordinates are fixed. We claim that for such $g$
  \begin{align}
    \begin{split}
      \sum_{\st_1 = 0}^\infty \ryl_{\eff_1}(\st_1) \, &\cdots \!
      \sum_{\st_M = 0}^\infty \ryl_{\eff_M}(\st_M)
      \, g(\st_1, \dots \st_M) \\
      &= \int_0^\infty \ryl_{\eff_1}(\st_1) \, \cdots
      \int_0^\infty \ryl_{\eff_M}(\st_M)
      \, g(\st_1, \dots \st_M) \,
      d\st_M \cdots d\st_1
      + O \paren*{ 1 } \,.
    \end{split}
    \label{eq:sum-int-indhyp}
  \end{align}
  The proof is by induction over $M$. For $M = 1$, the claim follows
  immediately from Lemma \ref{lemma:sum-integral}. If $M \ge
  2$, we apply Lemma \ref{lemma:sum-integral} to the innermost sum.
  This is possible because $\st_1, \dots \st_{M - 1}$ are fixed
  inside this sum, so $g$ (as a function in $\st_M$) satisfies the
  hypothesis of Lemma \ref{lemma:sum-integral}. Thus
  \begin{align}
    &\sum_{\st_1 = 0}^\infty \ryl_{\eff_1}(\st_1) \, \cdots \,
    \smashoperator[l]{\sum_{\st_{M - 1} = 0}^\infty}
    \! \ryl_{\eff_{M - 1}}(\st_{M - 1})
    \paren* {
      \int_0^\infty \ryl_{\eff_M}(\st_M)
      \, g(\st_1, \dots, \st_M) \, d\st_M +
      O\paren*{ 1 }
    } \nonumber \\
    &=
    \sum_{\st_1 = 0}^\infty \ryl_{\eff_1}(\st_1) \, \cdots \,
    \smashoperator[l]{\sum_{\st_{M - 1} = 0}^\infty}
    \! \ryl_{\eff_{M - 1}}(\st_{M - 1})
    \int_0^\infty \ryl_{\eff_M}(\st_M)
    \, g(\st_1, \dots, \st_M) \, d\st_M \nonumber \\
    & \quad + \sum_{\st_1 = 0}^\infty \ryl_{\eff_1}(\st_1) \, \cdots \,
    \smashoperator[l]{\sum_{\st_{M - 1} = 0}^\infty}
    \! \ryl_{\eff_{M - 1}}(\st_{M - 1})
    \; O \paren*{ 1 } \, .
    \label{eq:indstep-lemma2-application}
  \end{align}
  The sums in the third line of (\ref{eq:indstep-lemma2-application})
  are independent and may be evaluated separately. By applying Lemma
  \ref{lemma:sum-integral} to each of them, we find that
  \begin{align}
    \sum_{\st_i = 0}^\infty \ryl_{\eff_i}(\st_i)
    = \int_0^\infty \ryl_{\eff_i}(\st_i) \, d\st_i
    + O \paren*{ 1 }
    = O \paren*{ 1 } \, .
    \label{eq:sum-int-err-bound}
  \end{align}
  The integral in (\ref{eq:sum-int-err-bound}) equals 1 because
  $\ryl_{\eff_i}$ is the density of a
  Rayleigh distribution. Thus the third line of
  (\ref{eq:indstep-lemma2-application}) is $O \paren*{1}$. In the second line of
  (\ref{eq:indstep-lemma2-application}), the integral can be moved in
  front of all sums, because they all converge absolutely. Then,
  (\ref{eq:sum-int-indhyp})
  follows from the induction hypothesis.

  By applying this claim to $\min_{i = 1}^M (\st_i \tf_i)$ in
  (\ref{eq:def-ryl-approximated}), we get
  \begin{align*}
    \rya_{\idg_i,\tf_i}(\nd) =
    \int_0^\infty \ryl_{\eff_1}(\st_1) \, \cdots
    \int_0^\infty \ryl_{\eff_M}(\st_M)
    \min_{i = 1}^M(\st_i \tf_i) \, d\st_M \cdots d\st_1
    + O \paren*{ 1 } \,.
  \end{align*}
  A change of variables $t_i \colonequals \st_i \tf_i$ yields
  \begin{align*}
    \rya_{\idg_i,\tf_i}(\nd) =
    \int_0^\infty \ryl_{\eff_1\tf_1^2}(t_1) \, \cdots
    \int_0^\infty \ryl_{\eff_M\tf_M^2}(t_M)
    \min_{i = 1}^M(t_i) \, dt_M \cdots dt_1
    + O \paren*{ 1 } \, .
  \end{align*}
  Thus $\rya_{\idg_i,\tf_i}(\nd)$ is the mean of the minimum of $M$
  Rayleigh-distributed random variables with parameters
  $\sqrt{\eff_i}\tf_i$, up to a constant. The minimum of $M$
  Rayleigh-distributed
  random variables with parameters $\sigma_i$ is again
  Rayleigh-distributed with parameter $\big(\!\sum_{i=1}^M
  1/\sigma_i^2\big)\phantom{}^{-1/2}$. This is a standard fact and
  easy to show via a direct computation. Since the mean of a Rayleigh
  distribution with
  parameter $\sigma$ is $\sigma
  \sqrt{\pi/2}$, it follows that
  \begin{align*}
    \rya_{\idg_i,\tf_i}(\nd)
    &= \sqrt{\pi / 2} \, \paren* {
      \sum_{i = 1}^M \frac {1} {\eff_i\tf_i^2}
    }^{\!\! -1/2}
    + O \paren*{ 1 } \,.
  \end{align*}
  Substituting $\eff_i = n/(\idg_i - 1)$, as defined in
  §\ref{sec:prelim}, we get
  \begin{align*}
    \rya_{\idg_i,\tf_i}(\nd) &= \sqrt{\pi n / 2} \, \paren* {
      \sum_{i = 1}^M \frac {\idg_i - 1} {\tf_i^2}
    }^{\!\! -1/2}
    + O \paren*{ 1 } \,. \qedhere
  \end{align*}
\end{proof}

\section{Justification of the Rayleigh density approximation}
\label{sec:ryl-approx}

For the proof of Theorem \ref{th:main}, it remains to argue that
$\rya_{\idg_i, \tf_i}$ is asymptotic to
(\ref{eq:expectation-expr}). For this purpose, we bound the
difference between $P(\stv_i = \st_i)$ and $\ryl_{\eff_i}(\st_i)$ in
the following Lemma.

\begin{lemma}
  \label{lemma:pq-diff}
  Let $\stv_i$ be as in (\ref{eq:def-rand-var})
  and $\ryl_{\eff_i}(\st)$ as in (\ref{eq:def-q}).
  \begin{enumerate}
    \item[$(a)$] Let $1/2 \le \alpha < 2/3$. Then for every $s \in \N
      \cap [0, \eff_i^\alpha]$,
      \begin{align}
        \vertbar*{\ryl_{\eff_i}(\st) - P(\stv_i = \st)}
        = O\paren*{\eff_i^{4\alpha - 3}}
        \label{eq:P-Q-diff-bound-small}
      \end{align}
      as $\eff_i \to \infty$. This holds uniformly with respect to $\st$.
    \item[$(b)$] For every $\st \in \N$,
      \begin{align}
        \vertbar*{ \ryl_{\eff_i}(\st) - P(\stv_i = \st) }
        \le \frac \st {\eff_i} e^{-(\st - 1)\st/(2\eff_i)} \,.
        \label{eq:P-Q-diff-bound-triv}
      \end{align}
  \end{enumerate}
\end{lemma}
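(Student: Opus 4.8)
The plan is to estimate the two expressions directly from their closed forms. Starting from \eqref{eq:collision-probability}, for $\st \le \eff_i$ write
\begin{align*}
  P(\stv_i = \st) = \frac{\st}{\eff_i} \prod_{j=0}^{\st-1}\paren*{1 - \frac{j}{\eff_i}}
  = \frac{\st}{\eff_i} \exp\paren*{\sum_{j=0}^{\st-1} \log\paren*{1 - \frac j {\eff_i}}},
\end{align*}
and compare the exponent with $-\st^2/(2\eff_i)$, which appears in $\ryl_{\eff_i}(\st) = (\st/\eff_i)e^{-\st^2/(2\eff_i)}$. Using the Taylor expansion $\log(1-x) = -x - x^2/2 - x^3/3 - \cdots$ for $0 \le x < 1$, I would get
\begin{align*}
  \sum_{j=0}^{\st-1}\log\paren*{1 - \frac j {\eff_i}}
  = -\frac{1}{\eff_i}\sum_{j=0}^{\st-1} j - \frac{1}{2\eff_i^2}\sum_{j=0}^{\st-1} j^2 - \cdots
  = -\frac{\st(\st-1)}{2\eff_i} + O\paren*{\frac{\st^3}{\eff_i^2}},
\end{align*}
where the error term collects the quadratic-and-higher contributions (valid when $\st/\eff_i$ is bounded away from $1$, which holds on $[0,\eff_i^\alpha]$ for large $\eff_i$ since $\alpha < 1$). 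Comparing $-\st(\st-1)/(2\eff_i)$ with the target exponent $-\st^2/(2\eff_i)$ introduces a further discrepancy of size $\st/(2\eff_i)$. So the exponent of $P(\stv_i=\st)$ differs from $-\st^2/(2\eff_i)$ by $O(\st^3/\eff_i^2) + O(\st/\eff_i)$, and on the range $\st \le \eff_i^\alpha$ this is $O(\eff_i^{3\alpha - 2})$, which tends to $0$. Then $|e^u - e^v| \le |u-v|\max(e^u,e^v)$ together with $\st/\eff_i \le \eff_i^{\alpha-1}$ gives
\begin{align*}
  \vertbar*{\ryl_{\eff_i}(\st) - P(\stv_i = \st)}
  = \frac{\st}{\eff_i}\,\vertbar*{e^{-\st^2/(2\eff_i)} - e^{(\text{exponent of }P)}}
  = O\paren*{\eff_i^{\alpha - 1}\cdot\eff_i^{3\alpha - 2}}
  = O\paren*{\eff_i^{4\alpha - 3}},
\end{align*}
uniformly in $\st$ on the stated range, which is part $(a)$.

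For part $(b)$ the bound is much softer: I only need to dominate each of the two nonnegative quantities $\ryl_{\eff_i}(\st)$ and $P(\stv_i = \st)$ by the right-hand side, since $|x - y| \le \max(x,y)$ for $x, y \ge 0$. For $P(\stv_i = \st)$, when $\st \le \eff_i$ each factor $1 - j/\eff_i \le e^{-j/\eff_i}$, so $P(\stv_i = \st) \le (\st/\eff_i)e^{-\st(\st-1)/(2\eff_i)}$, and when $\st > \eff_i$ we have $P(\stv_i=\st) = 0$; either way the bound holds. For $\ryl_{\eff_i}(\st) = (\st/\eff_i)e^{-\st^2/(2\eff_i)}$, note $\st^2 \ge \st(\st-1)$ so $e^{-\st^2/(2\eff_i)} \le e^{-\st(\st-1)/(2\eff_i)}$, and again the bound holds. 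Taking the maximum of the two finishes $(b)$.

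The main obstacle is bookkeeping the error term in part $(a)$ carefully enough to land the exponent $4\alpha - 3$ precisely, rather than something weaker. The two sources of error — the higher-order Taylor terms $O(\st^3/\eff_i^2)$ from $\log(1 - j/\eff_i)$, and the $\st/(2\eff_i)$ mismatch between $\st(\st-1)$ and $\st^2$ — must both be tracked, and one has to confirm that on $\st \in [0, \eff_i^\alpha]$ with $\alpha < 2/3$ the dominant one is $O(\st^3/\eff_i^2) = O(\eff_i^{3\alpha-2}) = o(1)$, so that the linearization $|e^u - e^v| \lesssim |u - v|$ is legitimate (the constraint $\alpha < 2/3$ is exactly what makes $3\alpha - 2 < 0$). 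The prefactor $\st/\eff_i = O(\eff_i^{\alpha-1})$ then combines with the exponent discrepancy to give $O(\eff_i^{4\alpha-3})$; one should also check $4\alpha - 3 < 0$ on the allowed range of $\alpha$ so the bound is genuinely a decaying error, which it is since $\alpha < 2/3 < 3/4$. A minor point to state explicitly is uniformity: all implied constants depend only on the fixed choice of $\alpha$ (and the Taylor remainder bound on a fixed interval $[0, c]$ with $c < 1$), not on $\st$ or $\eff_i$.
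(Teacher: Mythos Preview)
Your proposal is correct. Part $(b)$ is essentially identical to the paper's argument. For part $(a)$ you take a different but equally valid route: the paper writes each factor as $1 - j/\eff_i = e^{-j/\eff_i} - R(-j/\eff_i)$ with $R(x) = e^x - 1 - x$, expands the product $\prod_j(e^{-j/\eff_i} - R(-j/\eff_i))$ over all subsets of indices, isolates the pure-exponential term $e^{-\st(\st-1)/(2\eff_i)}$, and bounds the remaining cross terms via $|R(x)| \le x^2$ and a geometric series. You instead take logarithms, Taylor-expand $\log(1 - j/\eff_i)$, sum to get the exponent $-\st(\st-1)/(2\eff_i) + O(\st^3/\eff_i^2)$, and then pass back through the exponential using $|e^u - e^v| \lesssim |u-v|$ once $|u-v| = o(1)$. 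Your route is arguably cleaner, avoiding the combinatorial subset expansion; the paper's route has the minor virtue of never invoking $\log(1 - j/\eff_i)$ and so never needing $j/\eff_i$ bounded away from $1$ (though this is automatic on $[0,\eff_i^\alpha]$ anyway). Both arrive at $O(\eff_i^{4\alpha-3})$ for the same structural reason: a prefactor $\st/\eff_i = O(\eff_i^{\alpha-1})$ times an exponent discrepancy of size $O(\eff_i^{3\alpha-2})$, with the constraint $\alpha < 2/3$ ensuring the latter is $o(1)$ so the linearization of $e^u - e^v$ is legitimate.
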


\begin{proof}
  For part $(a)$, let $R(x) \colonequals e^x - (1 + x)$. Then
  \begin{align*}
    \vertbar*{
      \ryl_{\eff_i}(\st) - P(\stv_i = \st)
    } = \frac \st {\eff_i}
    \vertbar*{ e^{-\st^2/(2\eff_i)} -
      \prod_{j=0}^{\st - 1} \paren*{ e^{-j/\eff_i} - R(-j/\eff_i) }
    }\,.
  \end{align*}
  Let $A \colonequals \{0, 1, \dots \st - 1\}$. By expanding the product on the
  right and moving terms with at least one factor of $R$ under separate
  absolute value bars, we get
  \begin{align}
    \begin{split}
      & \vertbar*{ \ryl_{\eff_i}(\st) - P(\stv_i = \st) } \\
      & \le \frac \st {\eff_i}
      \vertbar*{ e^{-\st^2/(2\eff_i)} - \prod_{j = 0}^{\st - 1}
      e^{-j/\eff_i} }
      + \frac \st {\eff_i} \sum_{\substack{H \subset A
      \\ |H| \ge 1}} \, \vertbar*{ \,
        \prod_{j \in H} R(-j/\eff_i) \prod_{j \in A \setminus H}
      e^{-j/\eff_i} } \,.
      \label{eq:P-Q-diff-expanded}
    \end{split}
  \end{align}
  The left term in the second line of (\ref{eq:P-Q-diff-expanded}) is
  \begin{align}
    \frac \st {\eff_i} \vertbar*{
      e^{-\st^2/(2\eff_i)} - e^{-\st(\st - 1)/(2\eff_i)}
    }
    \le \frac {\st} {\eff_i} \vertbar*{ e^{-\st/(2\eff_i)} - 1 }
    \le \frac {\st^2} {\eff_i^2}
    \le \eff_i^{2\alpha - 2} \,.
    \label{eq:first-term-bound}
  \end{align}
  The first inequality follows by factoring out $e^{-\st(\st -
  1)/(2\eff_i)}$, which is less than or equal to 1.
  For the second inequality, we used that $|e^x - 1| \le 2|x|$ if $|x|
  \le 1$.

  We turn to the right term of (\ref{eq:P-Q-diff-expanded}), which we
  denote by $r$. Since $j$ and $\eff_i$ are nonnegative,
  $e^{-j/\eff_i}$ is less than or equal to $1$. Hence
  \begin{align*}
    r = \frac \st {\eff_i}
    \sum_{\substack{H \subset A \\ |H| \ge 1}} \, \vertbar*{ \,
      \prod_{j \in H} R(-j/\eff_i) \prod_{j \in A \setminus H}
      e^{-j/\eff_i}
    }
    \le \frac \st {\eff_i} \sum_{\substack{H \subset A \\ |H| \ge 1}} \,
    \vertbar*{ \,
      \prod_{j \in H} R(-j/\eff_i)
    } \,.
  \end{align*}
  If $|x| \le 1$, then $R(x) \le x^2$, so $R(-j/\eff_i) \le
  j^2/\eff_i^2$. But $j \le \st$, thus
  \begin{align}
    r \le \frac \st {\eff_i}
    \sum_{m = 1}^{\st} \sum_{\substack{H \subset A \\ |H| = m}}
    \paren* { \frac {\st^2} {\eff_i^2} }^{\!\! m}
    = \frac \st {\eff_i}
    \sum_{m = 1}^{\st} \binom \st m
    \paren* { \frac {\st^2} {\eff_i^2} }^{\!\! m}
    \le \frac \st {\eff_i}
    \sum_{m = 1}^\infty \, \paren* { \frac {\st^3} {\eff_i^2} }^{\!\! m}.
    \label{eq:before-geometric-sum}
  \end{align}
  The last inequality holds because $\binom {\st} m \le \st^m$. We move out
  one factor of $\st^3/\eff_i^2$ from the rightmost sum of
  (\ref{eq:before-geometric-sum}), so that the sum starts at $m = 0$
  instead of $m = 1$. Then, by the geometric sum formula
  \begin{align}
    r \le \frac {\st^4} {\eff_i^3}
    \sum_{m = 0}^\infty \, \paren*{ \frac {\st^3} {\eff_i^2} }^{\!\! m}
    = \frac {\st^4} {\eff_i^3} \;
    \frac 1 {1 - \st^3/\eff_i^2}
    \le \eff_i^{4\alpha - 3} \;
    \frac 1 {1 - \eff_i^{3\alpha - 2}} \,.
    \label{eq:geometric-sum}
  \end{align}
  The geometric sum converges because $\st \le \eff_i^{\alpha}$
  and $\alpha < 2/3$. Combining (\ref{eq:P-Q-diff-expanded}),
  (\ref{eq:first-term-bound}) and (\ref{eq:geometric-sum}) yields
  \begin{align*}
    \vertbar*{ \ryl_{\eff_i}(\st) - P(\stv_i = \st) }
    \le \eff_i^{2\alpha - 2} + \eff_i^{4\alpha - 3} \frac 1 {1 -
    \eff_i^{3\alpha - 2}} \,.
  \end{align*}
  By the assumption $\alpha \ge 1/2$, the first summand
  $\eff_i^{2\alpha - 2}$ is less than or equal to $\eff_i^{4\alpha -
  3}$. Since $\alpha$ is less than $2/3$, the fraction $1/\paren*{1 -
  \eff_i^{3\alpha - 2}}$ on the right
  in (\ref{eq:geometric-sum}) is decreasing in $\eff_i$. Thus the
  right hand side is $O\paren*{\eff_i^{4\alpha-3}}$.

  For part $(b)$, we distinguish the cases $\st > \eff_i$ and $\st
  \le \eff_i$. If $\st > \eff_i$, it follows from the
  definition in (\ref{eq:collision-probability}) that $P(\stv_i =
  \st) = 0$. So in this case
  \begin{align*}
    \vertbar*{\ryl_{\eff_i}(\st) - 0 }
    = \frac {\st} {\eff_i} e^{-\st^2/(2\eff_i)}
    \le \frac {\st} {\eff_i} e^{-(\st - 1)\st/(2\eff_i)} \,,
  \end{align*}
  since the exponential function is increasing. If $\st_i \le
  \eff_i$, we apply the inequality $1 + x \le e^x$ to
  (\ref{eq:collision-probability}) with $x=-j/\eff_i$, yielding
  \begin{align*}
    P(\stv_i = \st)
    \le \frac \st {\eff_i} \prod_{j = 0}^{\st - 1} e^{-j/\eff_i}
    = \frac {\st} {\eff_i} e^{-(\st - 1)\st/(2\eff_i)} \,.
  \end{align*}
  Since $\ryl_{\eff_i}(\st) \le (\st/\eff_i)e^{-(\st -
  1)\st/(2\eff_i)}$ as well, $P(\stv_i = \st)$ and
  $\ryl_{\eff_i}(\st)$ are both nonnegative real numbers bounded by
  $(\st/\eff_i)e^{-(\st - 1)\st/(2\eff_i)}$. Thus the absolute value
  of their difference is at most $(\st/\eff_i)e^{-(\st - 1)\st/(2\eff_i)}$, too.
\end{proof}

With Lemma \ref{lemma:pq-diff}, we can now complete the proof of
Theorem \ref{th:main}. To show that (\ref{eq:expectation-expr}) is
asymptotic to $T_{\idg_i, \tf_i}$, our strategy will be to split the
sums in (\ref{eq:expectation-expr}) at $\st_i = \big \lfloor
\eff_i^{9/16} \big \rfloor$ and apply Lemma \ref{lemma:pq-diff} part
$(a)$ for small $\st_i$ and part $(b)$ for large $\st_i$.

\begin{proof}[Proof of Theorem \ref{th:main}]
  By Lemma \ref{lemma:ryl-approximated-formula}, it suffices to show that
  $\E \paren*{ \min_{i = 1}^M \tv_i } \sim \rya_{\idg_i,\tf_i}(\nd)$.
  We do this by successively exchanging $\ryl_{\eff_i}(\st_i)$ for
  $P(\stv_i = \st_i)$ in (\ref{eq:def-ryl-approximated}), for every
  $1 \le i \le M$. To avoid complicated notation, the detailed
  computation is only given for $i = 1$.

  By splitting the first sum in (\ref{eq:def-ryl-approximated}) at $a
  \colonequals \big \lfloor \eff_1^{9/16} \big \rfloor$, we get
  \begin{align*}
    \rya_{\idg_i,\tf_i}(\nd)
    = &\sum_{\st_1 = 0}^a \ryl_{\eff_1}(\st_1) \, \cdots \!
    \sum_{\st_M = 0}^\infty \ryl_{\eff_M}(\st_M)
    \min_{i = 1}^M(\st_i \tf_i) \\
    &+ \sum_{\st_1 = a + 1}^\infty \ryl_{\eff_1}(\st_1) \, \cdots \!
    \sum_{\st_M = 0}^\infty \ryl_{\eff_M}(\st_M)
    \min_{i = 1}^M(\st_i \tf_i) \,.
  \end{align*}
  Applying Lemma \ref{lemma:pq-diff} part $(a)$ with $\alpha = 9/16$ to
  $\ryl_{\eff_1}(\st_1)$ to the first line yields
  \begin{align*}
    \rya_{\idg_i,\tf_i}&(\nd)
    = \sum_{\st_1 = 0}^a
    \paren*{ P(\stv_1 = \st_1) + O \paren*{ \eff_1^{-3/4} }}
    \cdots \!
    \sum_{\st_M = 0}^\infty \ryl_{\eff_M}(\st_M)
    \min_{i = 1}^M(\st_i \tf_i) \\
    &+ \sum_{\st_1 = a + 1}^\infty
    \paren*{
      P(\stv_1 = \st_1) +
      \paren*{ \ryl_{\eff_1}(\st_1) - P(\stv_1  = \st_1) }
    }
    \, \cdots \!
    \sum_{\st_M = 0}^\infty \ryl_{\eff_M}(\st_M)
    \min_{i = 1}^M(\st_i \tf_i) \,.
  \end{align*}
  By moving the $O$-term in the first line and $\ryl_{\eff_1}(\st_1) -
  P(\stv_1 - \st_1)$ in the second line under separate sums,
  \begin{align}
    \rya_{\idg_i,\tf_i}&(\nd) =
    \sum_{\st_1 = 0}^\infty
    P(\stv_1 = \st_1)
    \sum_{\st_2 = 0}^\infty \ryl_{\eff_2}(\st_2) \, \cdots \!
    \sum_{\st_M = 0}^\infty \ryl_{\eff_M}(\st_M)
    \min_{i = 1}^M(\st_i \tf_i) \nonumber \\
    & +\sum_{\st_1 = 0}^a
    O \paren*{\eff_1^{-3/4} } \,
    \sum_{\st_2 = 0}^\infty \ryl_{\eff_2}(\st_2) \, \cdots \!
    \sum_{\st_M = 0}^\infty \ryl_{\eff_M}(\st_M)
    \min_{i = 1}^M(\st_i \tf_i) \label{eq:ryl-approx-error}  \\
    &+ \sum_{\st_1 = a + 1}^\infty
    \paren*{\ryl_{\eff_1}(\st_1) - P(\stv_1 = \st_1) }
    \sum_{\st_2 = 0}^\infty \ryl_{\eff_2}(\st_2) \, \cdots \!
    \sum_{\st_M = 0}^\infty \ryl_{\eff_M}(\st_M)
    \min_{i = 1}^M(\st_i \tf_i) \,. \notag
  \end{align}
  Because our goal is to show that $\rya_{\idg_i,\tf_i}$ is
  asymptotically equivalent to the first line of
  (\ref{eq:ryl-approx-error}), we now argue that the second and third
  line are $o \paren*{\sqrt{\nd}}$. This suffices for the proof since
  $\rya_{\idg_i,\tf_i}(\nd) = \Theta \paren*{\sqrt\nd}$ by Lemma
  \ref{lemma:ryl-approximated-formula}.

  In the second line of (\ref{eq:ryl-approx-error}), the nested sum over
  $\st_2, \dots, \st_M$ is at most
  \begin{align*}
    \rya_{\idg_2, \dots, \idg_M, \tf_2, \dots, \tf_M}(\nd)
  \end{align*}
  and hence $O(\sqrt \nd)$ by
  Lemma \ref{lemma:ryl-approximated-formula}. Since $a = O
  \paren*{\nd^{9/16}}$, the second line is $O \paren*{\nd^{5/16}} =
  o(\sqrt \nd)$.
  Denote the third line of (\ref{eq:ryl-approx-error}) by $r$. By Lemma
  \ref{lemma:pq-diff} part $(b)$ and by removing $\st_1 \tf_1$ from the
  minimum, we get
  \begin{align*}
    \begin{split}
      |r| \le \sum_{\st_1 = a + 1}^\infty \frac {\st_1} {\eff_1}
      e^{-(\st_1 - 1)\st_1 / (2\eff_1)}
      \sum_{\st_2 = 0}^\infty \ryl_{\eff_2}(\st_2)
      \, \cdots \!
      \sum_{\st_M = 0}^\infty \ryl_{\eff_M}(\st_M)
      \min_{i = 2}^M(\st_i \tf_i) \,.
    \end{split}
  \end{align*}
  The nested sum over $\st_2, \dots, \st_M$ is independent
  from the first and at most $O \paren*{\sqrt {\nd}}$ by Lemma
  \ref{lemma:ryl-approximated-formula}.
  To bound the first sum from above, we may replace it by an integral
  from $a$ to $\infty$, because the function $(\st_1/\eff_1)
  e^{-(\st_1 - 1)\st_1 /
  (2\eff_1)}$ is decreasing in $\st_1$ for $\st_1 \ge a$. Thus
  \begin{align*}
    |r| \le \int_a^\infty \frac{\st_1}{\eff_1} e^{-(\st_1 - 1)\st_1 /
    (2\eff_1)} \, d\st_1
    \; O \paren*{\sqrt \nd} \,.
  \end{align*}
  Since $\st_1 - 1 \ge \st_1 / \eff_1$ for large enough $\eff_1$ and
  $e^x$ is increasing,
  \begin{align*}
    |r| \le \int_a^\infty (\st_1 - 1) e^{-(\st_1 - 1)^2/(2\eff_1)} \,
    d\st_1 \, O \paren*{\sqrt \nd}
    = \eff_1 e^{-(a - 1)^2/(2\eff_1)} O \paren*{\sqrt \nd} \,.
  \end{align*}
  But $a = \big\lfloor \eff_1^{9/16} \big\rfloor$ and $\eff_1 = \Theta(n)$, so
  \begin{align}
    |r| = O\paren*{\nd^{3/2} e^{-\eff_1^{1/8}/2} } = o(\sqrt \nd) \,.
    \label{eq:q-p-sum-bound}
  \end{align}
  The last estimate holds because the exponential decays faster than
  any polynomial as $n \to \infty$. Thus the third line of
  (\ref{eq:ryl-approx-error}) is $o(\sqrt{\nd})$, too. We conclude that
  \begin{align*}
    & \rya_{\idg_i,\tf_i}(\nd)
    \sim \sum_{\st_1 = 0}^\infty
    P(\stv_1 = \st_1)
    \sum_{\st_2 = 0}^\infty \ryl_{\eff_2}(\st_2) \, \cdots \!
    \sum_{\st_M = 0}^\infty \ryl_{\eff_M}(\st_M)
    \min_{i = 1}^M(\st_i \tf_i) \,.
  \end{align*}

  As already mentioned, the same argument can be applied to exchange
  $\ryl_{\eff_i}(\st_i)$ for $P(\stv_i = \st_i)$ for every $1 \le i
  \le M$. The only difference is that in the second and third
  line of (\ref{eq:ryl-approx-error}), some of the
  $\ryl_{\eff_i}(\st_i)$ are replaced by $P(\stv_i = \st_i)$. A
  little extra care is necessary to deal with this. Say we
  want to replace $\ryl_{\eff_{i'}}(\st_{i'})$ by $P(\stv_{i'} =
  \st_{i'})$ for some $2
  \le i' \le M$. The minimum on the right is bounded from above by
  $s_j \tf_j$ for
  some $j \ne i'$. Thus in line two and three of
  (\ref{eq:ryl-approx-error}), there $M - 1$ independent sums over
  $P(\stv_i = \st_i)$ or $\ryl_{\eff_i}(\st_i)$, where the $j$-th one
  has an additional factor $\st_j \tf_j$. The sums associated to a
  Rayleigh density $\ryl_{\eff_i}(\st_i)$ may be replaced by an
  integral via Lemma \ref{lemma:sum-integral}. For every $i \notin
  \{i', j\}$, we have $\sum_{s_i = 0}^\infty P(\stv_i = \st_i) = 1$
  or $\int_0^\infty \ryl_{\eff_i}(\st_i) \, d\st_i = 1$. We do not
  know whether $j$ is associated to a sum or integral. But in any
  case, $\sum_{s_j = 0}^\infty P(\stv_j = \st_j) \st_j \tf_j =
  O(\sqrt{\nd})$ by the above argument for $M = 1$ and Lemma
  \ref{lemma:ryl-approximated-formula}, or $\int_0^\infty
  \ryl_{\eff_j}(\st_j) \st_j \tf_j \, ds_j = \tf_j \sqrt{\pi \eff_j /
  2} = O(\sqrt\nd)$ by the formula for the mean of a
  Rayleigh-distributed random variable. Thus the second and third
  line of (\ref{eq:ryl-approx-error}) are still $o(\sqrt \nd)$.
\end{proof}

\section{The optimal k for one machine}
\label{sec:opt-k-M1}

Before we come to the proof of Theorem \ref{th:M1}, we rewrite
(\ref{eq:def-grt}) in a more convenient form, which might also be
useful for $M \ge 2$. Rather than summing over
all $d \in \paren*{\Z/\lcmki\Z}^\times$, we sum over all possible
greatest common divisors $d - 1$ can have with $\lcmki$. To do so, we
need to count how often each greatest common divisor occurs among all
$d \in \paren*{\Z/\lcmki\Z}^\times$. For $n \in \N_{\ge 1}$ and $g
\in \N$ dividing $n$, define
\begin{align}
  \psi(n, g) \colonequals
  \vertbar*{\curly*{x \in \paren*{\Z/n\Z}^\times :
  \gcd(x - 1, n) = g }} \,.
  \label{eq:def-psi}
\end{align}
As a convention, we set $\psi(1, 1) = 1$. By the Chinese Remainder
Theorem, $\psi$ is
multiplicative in following sense: For coprime $n, m \in \N$ and $g$
dividing $nm$
\begin{align}
  \psi(nm, g) = \psi(n, \gcd(g, n)) \cdot \psi(m, \gcd(g, m)) \,.
  \label{eq:psi-multiplicative}
\end{align}
Thus $\psi(\lcmki, g) = \prod_{p^e \mid \lcmki} \psi(p^e, \gcd(p^e,
g))$. The notation $\prod_{p^e \mid \lcmki}$ means the product over all
prime powers $p^e$ dividing $\lcmki$, such that $e \ge 1$ and $p^{e +
1} \nmid \lcmki$.

We can now write (\ref{eq:def-grt}) as\footnote{
  If $\psi(\lcmki, g) = 0$, we understand
  the respective summand to be excluded in (\ref{eq:grt-over-g}).
  In particular, all odd $g$ are excluded, since $\psi(2^e, 1) = 0$ for
  every $e \in \N_{\ge 1}$. This
  is relevant if $g$ is coprime to all the $k_i$, since then we would
divide by 0.}
\begin{align}
  \begin{split}
    \grt(k_1, \dots, k_M)
    &= \frac 1 {\phi(\lcmki)} \sum_{g \mid \lcmki}
    \paren*{ \sum_{i = 1}^M
    \frac {\gcd(g, 2k_i) - 1} {\log_2^2 2k_i}}^{\!\!-1/2}
    \psi(\lcmki, g) \\
    &= \frac 1 {\phi(\lcmki)} \sum_{g \mid \lcmki}
    \paren*{ \sum_{i = 1}^M
    \frac {\gcd(g, 2k_i) - 1} {\log_2^2 2k_i}}^{\!\!-1/2}
    \prod_{p^e \mid \lcmki} \psi(p^e, \gcd(p^e, g)) \,.
  \end{split}
  \label{eq:grt-over-g}
\end{align}
To compute $\psi(n, g)$, it suffices to consider the case where $n$
is a prime power by (\ref{eq:psi-multiplicative}).
\begin{lemma}
  Let $p$ be a prime number, $e \in \N_{\ge 1}$ and $g$ be a divisor
  of $p^e$. Then
  \begin{align}
    \psi \paren*{p^e, g} =
    \begin{cases}
      \phi(p^e) - p^{e - 1} & \quad \text{if } g = 1 \\
      \phi(p^e / g) & \quad \text{if } g > 1
    \end{cases} \,.
    \label{eq:psi-prime-power}
  \end{align}
  \label{lemma:psi-prime-power}
\end{lemma}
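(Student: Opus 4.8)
The plan is to prove Lemma~\ref{lemma:psi-prime-power} by directly analyzing the condition $\gcd(x-1, p^e) = g$ for units $x \in (\Z/p^e\Z)^\times$. Since $g \mid p^e$, the only divisors in play are $g = p^j$ for $0 \le j \le e$. Writing $g = p^j$, the condition $\gcd(x-1, p^e) = p^j$ says that $p^j \mid x - 1$ but $p^{j+1} \nmid x-1$ (with the understanding that for $j = e$ the second condition is vacuous). I would handle the case $g > 1$ first, then $g = 1$ separately, since $g=1$ carries the extra constraint that $x$ itself is a unit, i.e. $p \nmid x$, which is not automatic the way it is when $p \mid x - 1$.

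For the case $g = p^j$ with $1 \le j \le e$: the set $\{x \in \Z/p^e\Z : p^j \mid x - 1\}$ is exactly the coset $1 + p^j \Z/p^e\Z$, which has $p^{e-j}$ elements, and every such $x$ is automatically a unit since $x \equiv 1 \pmod p$. Among these, I want to remove those with $p^{j+1} \mid x - 1$, i.e. those in $1 + p^{j+1}\Z/p^e\Z$, of which there are $p^{e-j-1}$ when $j < e$ and $0$ such extra constraint when $j = e$. So $\psi(p^e, p^j) = p^{e-j} - p^{e-j-1} = \phi(p^{e-j}) = \phi(p^e/g)$ for $1 \le j < e$, and $\psi(p^e, p^e) = 1 = \phi(1) = \phi(p^e/g)$ for $j = e$; in both subcases this is $\phi(p^e/g)$, matching the claim. (One should note $\phi(p^0) = \phi(1) = 1$ to unify these.)

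For the case $g = 1$: now the condition is $\gcd(x-1, p^e) = 1$, i.e. $p \nmid x - 1$, together with $x$ being a unit, i.e. $p \nmid x$. Equivalently, $x \not\equiv 0, 1 \pmod p$. The number of residues mod $p$ that are forbidden is $2$ (these are distinct since $p \ge 2$, and for $p = 2$ they exhaust everything, giving $\psi(2^e,1) = 0$, consistent with the footnote), so by lifting, the count is $p^{e-1}(p - 2) = \phi(p^e) - p^{e-1}$, since $\phi(p^e) = p^{e-1}(p-1)$. This matches the stated formula.

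I do not anticipate a serious obstacle here; the lemma is essentially a bookkeeping exercise with $p$-adic valuations and the coset structure of $(\Z/p^e\Z)^\times$. The only points requiring mild care are: (i) making sure the boundary case $j = e$ (equivalently $g = p^e$) is folded correctly into the $g > 1$ formula, using $\phi(1) = 1$; and (ii) observing that for $g > 1$ the unit condition is free, whereas for $g = 1$ it genuinely removes the extra residue class $x \equiv 0 \pmod p$, which is the source of the $-p^{e-1}$ correction term (rather than a naive $\phi(p^e) - (\text{count of } x \equiv 1) = \phi(p^e) - p^{e-1}\cdot\tfrac{1}{?}$ miscount). Once these are pinned down the two cases assemble immediately into (\ref{eq:psi-prime-power}).
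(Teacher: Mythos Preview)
Your proof is correct. For $g = 1$ it coincides with the paper's argument. For $g > 1$ the paper proceeds slightly differently: instead of counting the cosets $1 + p^j\Z/p^e\Z$ and $1 + p^{j+1}\Z/p^e\Z$ and subtracting, it exhibits an explicit bijection
\[
\{x \in (\Z/p^e\Z)^\times : \gcd(x-1, p^e) = g\} \longrightarrow (\Z/(p^e/g)\Z)^\times, \qquad x \mapsto (x-1)/g,
\]
and checks that this map is well defined, injective, and surjective. Your direct subtraction is a touch more elementary and folds the boundary case $g = p^e$ in without a separate surjectivity check; the paper's bijection, in exchange, explains structurally why the answer is $\phi(p^e/g)$ by identifying the set being counted with an actual group of units. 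Either argument is short and complete.
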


\begin{proof}
  First assume $g = 1$. There are $\phi(p^e)$ units
  in $\Z/p^e\Z$ and they are precisely the congruence classes not divisible
  by $p$. But we must exclude the units equivalent to 1 modulo
  $p$ due to the condition $\gcd(x - 1, p^e) = g = 1$ in
  (\ref{eq:def-psi}). Since there are $p^{e - 1}$ such units, it follows that
  $\psi(p^e, g) = \phi(p^e) - p^{e - 1}$.

  If $g > 1$, the condition $\gcd(x - 1,
  p^e) = g$ implies $\gcd((x - 1)/g, p^e/g) = 1$. This yields a map
  \begin{align}
    \begin{split}
      \curly*{x \in \paren*{\Z/p^e\Z}^\times : \gcd(x - 1, p^e) = g }
      &\to \paren*{\Z/(p^e/g)\Z}^\times \\
      x &\mapsto (x - 1)/g \,.
    \end{split}
    \label{eq:bij-units}
  \end{align}
  It is easy to see that this map is well defined and injective. To prove
  that it is surjective, let $y \in \Z/(p^e/g)\Z$. Identifying the
  congruence class $y$ with its representative in $[0, p^e/g - 1]$,
  we consider $gy + 1$ as a preimage. Since $p \mid g$ by assumption,
  it follows that $gy + 1
  \in \paren*{\Z/p^e\Z}^\times$. We also have $\gcd(gy, p^e) = g$,
  because $p$ does not divide $y$. Thus the map in
  (\ref{eq:bij-units}) is bijective, so $\psi(p^e, g) = \phi(p^e/g)$.
\end{proof}

\begin{proof}[Proof of Theorem \ref{th:M1}]
  We write $k \colonequals k_1$ in this proof. Then $\lcmki =
  2k$. Substituting $M = 1$ into (\ref{eq:grt-over-g}) yields
  \begin{align}
    \grt(k) = \frac {\log_2 2k} {\phi(2k)}
    \sum_{g \mid 2k} \frac 1 {\sqrt{g - 1}}
    \prod_{p^e \mid 2k} \psi(p^e, g) \,.
    \label{eq:grt-M1}
  \end{align}
  Since $\grt(1) = 1$, our aim is to show that $\grt(k) > 1$ for all $k \ge
  2$. To derive a lower bound, we only consider the summand where $g
  = 2$. Let $e_2 \in \N$ such that $2^{e_2} \mid 2k$ and $2^{e_2 + 1}
  \nmid 2k$ (so $e_2 \ge 1$). Then $\psi(2^{e_2}, g) = \phi(2^{e_2 -
  1})$ and $\psi(p^e, g) = \phi(p^e) - p^{e - 1}$ for every prime $p$
  not equal to 2,  by Lemma \ref{lemma:psi-prime-power}. Thus
  \begin{align*}
    \grt(k)
    \ge \frac {\log_2 2k} {\phi(2k)} \prod_{p^e \mid 2k} \psi(p^e, g)
    = \frac {\log_2 2k} {\phi(2k)} \, \phi(2^{e_2 - 1})
    \prod_{p^e \mid 2k, p \ne 2} (\phi(p^e) - p^{e - 1}) \,.
  \end{align*}
  Factoring out $\phi(p^e)$ from the product on the right for every
  $p$ and using that $\phi(p^e) =
  p^e - p^{e - 1}$ yields
  \begin{align}
    \grt(k) \ge \frac {\log_2 2k} {\phi(2k)} \phi(k)
    \prod_{p \mid k, p \ne 2} \paren*{ 1 - \frac 1 {p - 1} }
    \ge \frac {\log_2 2k} 2
    \prod_{p \mid k, p \ne 2} \paren*{ 1 - \frac 1 {p - 1} } \, .
    \label{eq:grt-M1-prod}
  \end{align}
  Let
  \begin{align*}
    \mathcal P \colonequals \prod_{p \mid k, p \ne 2} \paren*{ 1 -
    \frac 1 {p - 1} } = \exp \paren*{
      \sum_{p \mid k, p \ne 2}
      \log \paren*{1 - \frac 1 {p - 1}}
    } \,.
  \end{align*}
  Since $\log \paren*{1 + x} \ge x - x^2/2 + (5/9)x^3$ for $x \in [-1/2, 0]$,
  \begin{align}
    \mathcal{P}
    \ge \exp \paren* {
      - \smashoperator[l]{\sum_{p \mid k, p \ne 2}} \frac 1 {p - 1}
      - \smashoperator[l]{\sum_{p \mid k, p \ne 2}} \frac {1/2}
      { \paren*{p - 1}^2}
      - \smashoperator[l]{\sum_{p \mid k, p \ne 2}} \frac {5/9}
      { \paren*{p - 1}^3}
    }.
    \label{eq:log-taylor}
  \end{align}
  Let $\omega(k)$ be the number of distinct prime divisors of $k$.
  Denote by $p_i$ the $i$-th prime number (so $p_1 = 2, \, p_2 = 3,
  \, \dots$). Because $1/(p - 1)$ is decreasing in $p$, we may
  replace the prime divisors of $k$ by $p_2, \dots, p_{\omega_2(k) +
  1}$ in the sums in (\ref{eq:log-taylor}). If $2 \mid k$, one
  additional prime is included this way, since the sums in
  (\ref{eq:log-taylor}) range only over primes not equal to 2.
  To derive a lower bound, however, this is acceptable. Thus
  \begin{align*}
    \mathcal{P}
    \ge \exp \paren*{
      - \smashoperator[l]{\sum_{i = 2}^{\omega(k) + 1}} \!
      \frac 1 {p_i - 1}
      - \smashoperator[l]{\sum_{i = 2}^{\omega(k) + 1}} \!
      \frac {1/2} {\paren*{p_i - 1}^2}
      - \smashoperator[l]{\sum_{i = 2}^{\omega(k) + 1}} \!
      \frac {5/9} {\paren*{p_i - 1}^3}
    } \,.
  \end{align*}
  Since $p_i - 1\ge p_{i - 1}$,
  \begin{align*}
    \mathcal{P}
    \ge \exp \paren*{
      -\sum_{i = 1}^{\omega(k)} \frac 1 {p_i}
      - \frac 1 2 \sum_{i = 1}^{\omega(k)} \frac 1 {p_i^2}
      - \frac 5 9 \sum_{i = 1}^{\omega(k)} \frac 1 {p_i^3}
    } \,.
  \end{align*}
  Using that $\sum_{i = 1}^\infty 1/p_i^2 \approx 0.452$ and $\sum_{i
  = 1}^\infty 1/p_i^3 \approx 0.125$, we can bound this by
  \begin{align}
    \mathcal{P}
    > \exp \paren*{
      -\sum_{i = 1}^{\omega(k)} \frac 1 {p_i}
      - \frac 3 {10}
    }.
    \label{eq:prod-p-1-bound}
  \end{align}
  If $\omega(k) \le 2$, then by (\ref{eq:prod-p-1-bound}) $\mathcal
  P$ is bounded by $e^{-17/15}$ from below, since $1/2 + 1/3 + 3/10 =
  17/15$. Inserting this into (\ref{eq:grt-M1-prod}) yields $\grt(k) >
  \paren* {\log_2 2k} / (2e^{17/15})$, which shows that $\grt(k) > 1$
  for $k \ge 38$.

  Now to the case $\omega(k) \ge 3$. By Rosser's theorem \cite{ros39}, $p_i >
  i \log i$. Thus
  \begin{align}
    \sum_{i = 1}^{\omega(k)} \frac 1 {p_i}
    < 5/6 + \sum_{i = 3}^{\omega(k)} \frac 1 {i \log i}
    \le  5/6 +
    \smashoperator[lr]{\int_2^{\omega(k)}} \frac 1 {x \log x} \, dx
    < \log \log \omega(k) + 6/5 \, .
    \label{eq:rosser}
  \end{align}
  Plugging (\ref{eq:rosser}) into (\ref{eq:prod-p-1-bound}), we get
  $\mathcal P > e^{-\log \log \omega(k) - 3/2}$. But $\omega(k) <
  \log k$ by the assumption $\omega(k) \ge 3$, so $\mathcal P >
  e^{-3/2}/(\log \log k)$. Applying this bound to
  (\ref{eq:grt-M1-prod}), we obtain
  \begin{align}
    \grt(k) > \frac {\log_2 2k} {2 e^{3/2} \log \log k} \, .
    \label{eq:ek-final-bound}
  \end{align}
  If $k \ge 2.1 \cdot 10^7$, the right side of
  (\ref{eq:ek-final-bound}) is greater than 1. For all $2 \le k <
  2.1 \cdot 10^7$, the inequality
  $\grt(k) > 1$ was verified numerically, with the code available at
  \url{https://github.com/finn-rudolph/rhok-formula}.
\end{proof}

\section{Table of values for G}
\label{sec:table-of-values}

Values of $\grt$ relative to $\grt(1, 1, \dots, 1)$ are displayed in
Figure \ref{fig:M1} for $M = 1$ and in Figure
\ref{fig:M2} for $M = 2$. For $M = 1$, the running time generally
increases with $k$ and has local maxima at prime $k$ (except for
$k = 2, 3$). Local minima occur at numbers with many small distinct
prime factors.

\begin{figure}[!htb]
  \centering

  \begin{tikzpicture}
    \begin{axis}[
        xlabel = {$k$},
        ylabel = {$G(k)$},
        xmin = 1,
        xmax = 64,
        xtick = {1, 8, 16, 24, 32, 40, 48, 56, 64},
        width = \textwidth,
        height = 240pt,
        legend pos = north west,
        legend style = {draw = none},
        legend cell align = left,
      ]

      \addplot[mark=square]
      coordinates{
        (1 ,   1)
        (2 ,   1.5773502691896257)
        (3 ,   1.8704964374506134)
        (4 ,   2.2164860566491402)
        (5 ,   2.7682734124061352)
        (6 ,   2.0847230997701387)
        (7 ,   3.3487908119117074)
        (8 ,   2.8954319505678203)
        (9 ,   2.8751374461044583)
        (10,   2.860468041948317)
        (11,   4.110801232720638)
        (12,   2.49867142236409)
        (13,   4.387077070265018)
        (14,   3.347742428949697)
        (15,   2.981346656484012)
        (16,   3.594729442047154)
        (17,   4.824847264057605)
        (18,   2.86115159715859)
        (19,   5.004306760955722)
        (20,   3.300312143156222)
        (21,   3.4464061409132047)
        (22,   3.976341228294828)
        (23,   5.3099184126352545)
        (24,   2.9799685734007593)
        (25,   4.649462480147749)
        (26,   4.201908355307487)
        (27,   3.9343331208985237)
        (28,   3.78895160584554)
        (29,   5.676478407355586)
        (30,   2.9006003880189937)
        (31,   5.78113499064364)
        (32,   4.303622115889641)
        (33,   4.03982369338442)
        (34,   4.557322202653078)
        (35,   4.4996861521754505)
        (36,   3.200039415360687)
        (37,   6.057156370621868)
        (38,   4.702402609745771)
        (39,   4.251370145477844)
        (40,   3.840056366646475)
        (41,   6.216273071182121)
        (42,   3.293671983794555)
        (43,   6.289854314772301)
        (44,   4.407589658164336)
        (45,   3.7566944358569265)
        (46,   4.948752651453858)
        (47,   6.426873424506402)
        (48,   3.488462083590686)
        (49,   5.79033355040268)
        (50,   4.344186892962921)
        (51,   4.583494417105624)
        (52,   4.627075627502184)
        (53,   6.611163865172371)
        (54,   3.704993730277928)
        (55,   5.213244247322634)
        (56,   4.350804017518562)
        (57,   4.718699359650112)
        (58,   5.243191113483301)
        (59,   6.77494748945913)
        (60,   3.1790610972370525)
        (61,   6.825726165781633)
        (62,   5.327072193434716)
        (63,   4.248127467002441)
        (64,   5.016817975614528)
      };
    \end{axis}
  \end{tikzpicture}
  \vspace{-2.2em}
  \caption{Values of $\grt(k)$ for $M = 1$.}
  \label{fig:M1}
\end{figure}
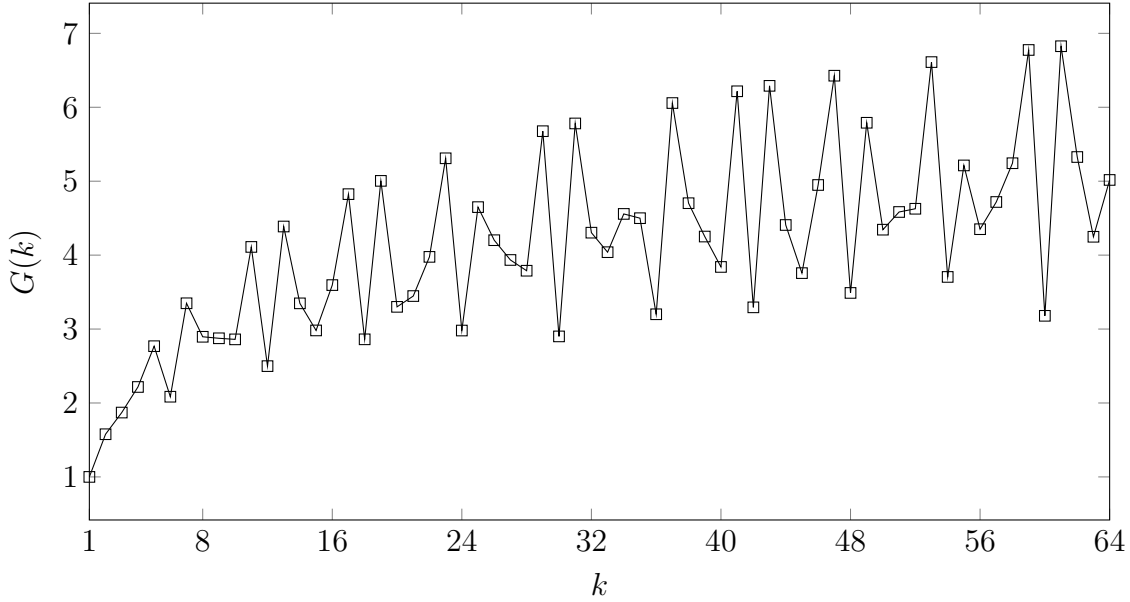

\begin{figure}[!htb]
  \centering
  \footnotesize
  \newcolumntype{C}{m{1.7em}}

  \begin{tabular}{c|CCCCCCCCCCCCCC}
    $k_1 \backslash k_2$ & 1 & 2 & 3 & 4 & 5 & 6 & 7 & 8 & 9 & 10 &
    11 & 12 & 13 & 14 \\
    \hline
    1 & 1.00 & 1.17 & 1.19 & 1.24 & 1.28 & 1.22 & 1.31 & 1.28 & 1.27
    & 1.28 & 1.34 & 1.25 & 1.35 & 1.31 \\
    2 &  & 1.58 & 1.57 & 1.80 & 1.80 & 1.70 & 1.90 & 1.93 & 1.78 &
    1.86 & 1.99 & 1.80 & 2.01 & 1.93  \\
    3 &  &  & 1.87 & 1.82 & 1.98 & 1.92 & 2.12 & 2.00 & 2.20 & 1.98 &
    2.25 & 2.02 & 2.29 & 2.09 \\
    4 &  &  &  & 2.22 & 2.21 & 2.02 & 2.39 & 2.48 & 2.18 & 2.33 &
    2.57 & 2.23 & 2.62 & 2.48 \\
    5 &  &  &  &  & 2.77 & 2.10 & 2.75 & 2.52 & 2.47 & 2.74 & 3.01 &
    2.28 & 3.09 & 2.69 \\
    6 &  &  &  &  &  & 2.08 & 2.26 & 2.25 & 2.31 & 2.17 & 2.42 & 2.25
    & 2.47 & 2.31 \\
    7 &  &  &  &  &  &  & 3.35 & 2.78 & 2.73 & 2.75 & 3.42 & 2.49 &
    3.52 & 3.26  \\
    8 &  &  &  &  &  &  &  & 2.90 & 2.50 & 2.69 & 3.06 & 2.54 & 3.14
    & 2.92 \\
    9 &  &  &  &  &  &  &  &  & 2.88 & 2.47 & 3.00 & 2.52 & 3.09 & 2.68 \\
    10 &  &  &  &  &  &  &  &  &  & 2.86 & 3.02 & 2.40 & 3.10 & 2.82 \\
    11 &  &  &  &  &  &  &  &  &  &  & 4.11 & 2.71 & 4.00 & 3.34 \\
    12 &  &  &  &  &  &  &  &  &  &  &  & 2.50 & 2.78 & 2.58 \\
    13 &  &  &  &  &  &  &  &  &  &  &  &  & 4.39 & 3.45 \\
    14 &  &  &  &  &  &  &  &  &  &  &  &  &  & 3.35 \\
  \end{tabular}
  \caption{Values of $G(k_1, k_2)$ relative to $G(1, 1)$.}
  \label{fig:M2}
\end{figure}

For $M = 2$, the running time still generally increases with the
$k_i$. However, there is a notable difference to $M = 1$. While for $M =
1$, the running time is strictly increasing for prime $k$, this need
not be the case for fixed $k_1$ and $k_2$ varying over the primes.
Consider $k_1 = k_2 = 2$ and $k_1 = 2, k_2 = 3$ as an example. The
running time for $k_1 = k_2 = 2$ is higher, even though both 2 and 3
are prime. The same happens for the primes 5, 7 and 11, 13. This
suggests that it is better to ``bet on'' multiple primes than to
focus on just one prime.

\section{Final Remarks}
\label{sec:final-remarks}

\subsection{Optimal iteration maps for more than one machine}
\label{sec:opt-k-general-M}

The most interesting open question is certainly: Given some $M \ge
2$, which $k_1, \dots, k_M$ minimize $G(k_1, \dots, k_M)$?
Considering the range of $k_i$ displayed in Figure \ref{fig:M2}, it
is best to choose $k_1 = k_2 = 1$ for $M = 2$. Moreover, since the
running time generally increases with the $k_i$, the choice $k_1 =
k_2 = 1$ seems to be optimal. This pattern holds for $M \ge 3$ as well.
To explore the behavior of $G$ for larger $M$, I used the program
available at \url{https://github.com/finn-rudolph/rhok-formula} and
computed values of $G$ for $2 \le M \le 10$ and $1 \le k_i \le
k_{\max}$, as shown in the table below.
\begin{center}
  \smallskip
  \begin{tabular}{c|ccccccccc}
    $M$ & 2 & 3 & 4 & 5 & 6 & 7 & 8 & 9 & 10 \\
    $k_{\max }$ & 3000 & 400 & 120 & 48 & 30 & 24 & 22 & 19 & 16
  \end{tabular}
  \smallskip
\end{center}
For every $2 \le M \le 10$, the running time is minimal at $k_1 =
\cdots = k_M = 1$, among all choices of $k_i$ with $1 \le k_i \le
k_{\max}$ for every $i$. Also, it generally increases with larger
$k_i$, like in Figures
\ref{fig:M1} and \ref{fig:M2}, suggesting that $k_i = 1$ for all $1
\le i \le M$ is optimal for every $M \in \N_{\ge 1}$.

\subsection{The time per iteration}
\label{sec:time-per-it}

As briefly noted in §\ref{sec:general-formulation}, we assume that the
duration of each iteration of the $i$-th machine is proportional to
$\log_2 2k_i$. This assumption was used by setting $\lambda_i = \log_2 2k_i$ in
the derivation of (\ref{eq:main}) from Theorem \ref{th:main}. To
justify it, the optimization described by Brent \cite{bre80} must be
applied. Then, the $\gcd$ computation happens in
batches, which only scales the total running time by a constant and
does not impact the per-iteration cost. The work per iteration thus
only consists of the computation of the power $x^{2k_i}$, a few
additions and one multiplication. The power requires around
$\log_2 2k_i$ steps using binary exponentiation and the other
operations are negligible.

\subsection{The random map heuristic}
\label{sec:rma}

Numerical results supporting Heuristic \ref{heu:rma} are given in
\cite{bp81}. There are few rigorous results about the
apparently random behavior of $\itfn_i$. Bridy and Garton remark in
\cite{bg18} that the cycle structure of maps of the form $x \mapsto
x^k + a$ cannot exactly match the cycle structure of random maps,
because the number of cycles of length $m$ is inherently bounded by
$m^{-1} \deg f_i^m$. However, they show that it is ``as
random as possible'' under this constraint. In \cite{ba91}, Bach
proves a rigorous estimate for the success probability of the rho
method after a certain number of steps.

\printbibliography

\end{document}